\newtheorem{theorem}{Theorem}[section]
\newtheorem{lemma}[theorem]{Lemma}
\theoremstyle{definition}
\newtheorem{definition}[theorem]{Definition}
\newtheorem{example}[theorem]{Example}
\theoremstyle{remark}
\newtheorem{remark}[theorem]{Remark}
\numberwithin{equation}{section}
\begin{document}
\setcounter{page}{1}

\title[$L^p$-$L^q$ boundedness of pseudo-differential operators  ]{$L^p$-$L^q$ boundedness of pseudo-differential operators on graded Lie groups}

 \author[D. Cardona]{Duv\'an Cardona}
\address{
 Duv\'an Cardona:
  \endgraf
  Department of Mathematics: Analysis, Logic and Discrete Mathematics
  \endgraf
  Ghent University, Belgium
  \endgraf
  {\it E-mail address} {\rm duvan.cardonasanchez@ugent.be}
  }

  \author[V. Kumar]{Vishvesh Kumar}
\address{
 Vishvesh Kumar:
  \endgraf
  Department of Mathematics: Analysis, Logic and Discrete Mathematics
  \endgraf
  Ghent University, Belgium
  \endgraf
  {\it E-mail address} {\rm vishveshmishra@gmail.com, Vishvesh.Kumar@UGent.be}
  }

\author[M. Ruzhansky]{Michael Ruzhansky}
\address{
  Michael Ruzhansky:
  \endgraf
  Department of Mathematics: Analysis, Logic and Discrete Mathematics
  \endgraf
  Ghent University, Belgium
  \endgraf
 and
  \endgraf
  School of Mathematical Sciences
  \endgraf
  Queen Mary University of London
  \endgraf
  United Kingdom
  \endgraf
  {\it E-mail address} {\rm michael.ruzhansky@ugent.be}
  }

 \allowdisplaybreaks

\subjclass[2010]{Primary {22E30; Secondary 58J40}.}

\keywords{Pseudo-differential operator, Graded Lie groups, Mapping properties, Bessel potential, Rockland operators}

\thanks{The authors are supported  by the FWO  Odysseus  1  grant  G.0H94.18N:  Analysis  and  Partial Differential Equations and by the Methusalem programme of the Ghent University Special Research Fund (BOF)
(Grant number 01M01021). The last two authors are supported by  FWO Senior Research Grant G011522N. The third author is also supported  by EPSRC grants 
EP/R003025/2 and EP/V005529/1.}

\begin{abstract} In this paper we establish the   $L^p$-$L^q$ estimates for global pseudo-differential operators on graded Lie groups. We provide both necessary and sufficient conditions for the $L^p$-$L^q$ boundedness of  pseudo-differential operators associated with the global H\"ormander symbol classes on graded Lie groups, within the range $1<p\leq 2 \leq q<\infty$.  Additionally, we present a sufficient condition for the $L^p$-$L^q$ estimates of pseudo-differential operators within the range $1<p\leq q\leq 2$ or $2\leq p\leq q<\infty$. The proofs rely on estimates of the Riesz and Bessel potentials associated with Rockland operators, along with previously established results on $L^p$-boundedness of global pseudo-differential operators on graded Lie groups. Notably, as a byproduct, we also establish the sharpness of the Sobolev embedding theorem for the inhomogeneous Sobolev spaces on graded Lie groups.

\end{abstract} \maketitle

\tableofcontents
\section{Introduction} 
\subsection{Outline}
The boundedness of pseudo-differential operators is a fundamental problem in harmonic analysis that arises due to their diverse applications in harmonic analysis and partial differential equations \cite{HormanderBook34, Tay}. Several contributions have been made in the Euclidean space setting towards establishing the $L^p$-boundedness of pseudo-differential operators associated with the Hörmander symbol class $S^m_{\rho, \delta}(\mathbb{R}^n \times \mathbb{R}^n)$ \cite{B79, F73, Tay}. For non-commutative groups, recent papers \cite{CDR21JGA,DR19} have investigated the $L^p$-boundedness of pseudo-differential calculus associated with global H\"ormander classes on compact Lie groups and graded Lie groups. Furthermore, H\"ormander \cite{Hor67} established the $L^p$-$L^q$ boundedness of pseudo-differential operators associated with the symbol class $S^m_{\rho, \delta}(\mathbb{R}^n \times \mathbb{R}^n)$ for the range $1<p\leq q<\infty$ (see also \cite{Hounie}).   It is worth noting that H\"ormander provided a sufficient condition for the $L^p$-$L^q$ boundedness of Fourier multipliers in his seminal paper \cite{Hormander1960} for the range $1<p\leq 2 \leq q<\infty$, and this result has recently been extended to various settings such as compact homogeneous manifolds \cite{ARN1,ARN}, graded Lie groups \cite{RR22},  general locally compact unimodular groups \cite{AR, RR23}, quantum groups \cite{AMR}, and for the generalised Fourier transform on $\mathbb{R}^n$ and hypergroups \cite{KR, KRIMRN}, as well as for eigenfunction expansions of certain operators \cite{CK1, CKNR}. However, to the best of the authors' knowledge, no attempts have been made to establish the $L^p$-$L^q$ boundedness of pseudo-differential operators on noncommutative Lie groups. The primary objective of this article is to fill this gap and make progress towards the $L^p$-$L^q$ boundedness of pseudo-differential operators for the range $1<p, q<\infty$ associated with the global H\"ormander symbol classes $S^m_{\rho, \delta}(G \times \widehat{G})$ on graded Lie groups $G$ with its unitary dual $\widehat{G},$ introduced by V. Fischer and  the third author in the monograph  \cite{FischerRuzhanskyBook}.

Graded Lie groups, which include significant examples like the Heisenberg group, Heisenberg-type groups, and stratified Lie groups, are a notable class within the realm of homogeneous nilpotent Lie groups \cite{FS82, FischerRuzhanskyBook}. What sets them apart from other nilpotent Lie groups is the existence of hypoelliptic left-invariant homogeneous partial differential operators known as Rockland operators \cite{Rock78,terRob97}. The Helffer and Nourrigat solution to the Rockland conjecture sheds light on this distinguishing characteristic \cite{HN}.   These groups have significant applications in analysis, representation theory, and geometry \cite{Howe80}. A visionary program for studying analysis and PDEs on graded Lie groups was presented by Stein in his 1970 lecture at the ICM in Nice \cite{Stein71}.  In the influential paper by Rothschild and Stein \cite{RS76}, it was demonstrated how one can learn a great deal about
certain differential operators on manifolds, by approximation  using  operators on certain
homogeneous nilpotent groups \cite{F77, Roth83}. Inspired by the setup of global pseudo-differential operators on the Heisenberg group $\mathbb{H}^n$ suggested  by M. Taylor \cite{Tay1}, V. Fischer and the third author \cite{FischerRuzhanskyBook} developed a Kohn-Nirenberg type calculus on general graded Lie groups for a suitably defined operator-valued global H\"ormander symbols classes $S^m_{\rho, \delta}(G \times \widehat{G})$ on  a graded Lie group $G$. Recently, this calculus was extended by Federico, Rottensteiner, and the third author \cite{FRR23} using the so-called $\tau$-quantisation on graded Lie groups, which, in particular, gives rise to Weyl-type pseudo-differential calculus on graded Lie groups. This construction is based on the general setup of pseudo-differential operators on unimodular type-I locally compact groups \cite{MR17}.    The $L^p$-boundedness of pseudo-differential operators associated with operator-valued H\"ormander symbol class $S^m_{\rho, \delta}(G \times \widehat{G})$ was established  in \cite{CDR21JGA} and its validity for $\tau$-quantisations was discussed in \cite{FRR23}. We also refer to \cite{CDRFC} for further developments and applications to PDEs.

\subsection{Main results}
To state our main results, some notation, and basic knowledge should be in order. We refer to Section \ref{sec2} for a detailed exposition. Let $G$ be a graded Lie group of homogeneous dimension $Q.$
We denote the positive Rockland operator on $G$ by $\mathcal{R},$ that is, a positive left-invariant hypoelliptic differential operator which is homogeneous. The existence of such an operator on $G$ is assured by the resolution of the Rockland conjecture by Helffer and Nourrigat \cite{HN}. Some examples of the Rockland operators in particular situations are as follows:
\begin{itemize}
    \item For the Heisenberg group $\mathbb{H}^n:$ the positive sub-Laplacian $(-\Delta_{\mathbb{H}^n})$ and its powers.
    \item For a stratified Lie group $G:$ let $X_1,X_2,\ldots, X_m$ be a basis of the first stratum $\mathfrak{g}_1$ of the stratified Lie algebra $\mathfrak{g}$ of $G,$ then the operator 
    $$\mathcal{R}:=(-1)^k \sum_{i=1}^m c_i X_i^{2k},\,\,c_i>0$$ is an example of a Rockland operator for any $k \in \mathbb{N}.$ Note that the case $k=1$ corresponds to  H\"ormander type sub-Laplacians.
    \item In a more general setup, that is, on a graded Lie group $G$ with a basis $X_1,X_2,\ldots,X_n$  of the Lie algebra $\mathfrak{g}$ of $G$ satisfying $D_r X_i=r^{\nu_i}X_i,\,i=1,\ldots, n,\,r>0,$ with the dilation weights $\nu_1,\nu_2,\ldots,\nu_n,$ denoting by $\nu_0$ any common multiple of $\nu_1,\nu_2,\ldots,\nu_n,$ the operator 
$$\mathcal{R}=\sum_{i=1}^n(-1)^{\frac{\nu_0}{\nu_i}}c_i X^{2 \frac{\nu_0}{\nu_i}},\,c_i>0$$ is a Rockland operator of homogeneous degree $2\nu_0$ on the graded Lie group $G.$
\end{itemize}
There have been several studies dealing with function spaces such as homogeneous and inhomogeneous Sobolev spaces on graded Lie groups associated with Rockland operators. We refer to \cite{FRsobo,FischerRuzhanskyBook,Nikolskii} and references therein for more detail. In particular, a sufficient condition for the $L^p$-$L^q$ boundedness of the Riesz operator $I_a:=\mathcal{R}^{-\frac{a}{\nu}}$ with $0<a<Q$ is given in \cite{FischerRuzhanskyBook}.

The following theorem presents the result that establishes a sufficient condition and in some cases a necessary condition, for the $L^p$-$L^q$ boundedness of pseudo-differential operators on graded Lie groups associated with the global H\"ormander symbol classes $S^m_{\rho, \delta}(G \times \widehat{G}).$ We denote the class of operators associated with $\textnormal{Op}({S}^{m}_{\rho,\delta}(G\times \widehat{G}))$ by $\Psi^{m}_{\rho,\delta}(G\times \widehat{G}).$ 
\begin{theorem}\label{main:theorem} 
Let $1<p, q<\infty.$ Let $G$ be a graded Lie group of  homogeneous dimension $Q$ and let $0\leq \delta\leq \rho\leq 1,$ $\delta\neq 1.$ Then, the following statements hold.
\begin{itemize}
    \item Let $1<p\leq2\leq  q<\infty.$ Every pseudo-differential operator $A\in S^{m}_{\rho,\delta}(G\times \widehat{G})$ admits a bounded extension from $L^p(G)$ into $L^q(G),$ that is
\begin{equation}\label{Lp-Lq:bound}
   \forall f\in C^{\infty}_0(G),\, \Vert A f\Vert_{L^q(G)}\leq  C\Vert  f\Vert_{L^p(G)}\,\,\,
\end{equation} holds, if and only if, 
\begin{equation}\label{Necessary:condition:3:intro}
   m\leq -Q\left(\frac{1}{p}- \frac{1}{q}\right).
\end{equation}  
\item Every pseudo-differential operator $A\in S^{m}_{\rho,\delta}(G\times \widehat{G})$ admits a bounded extension from $L^p(G)$ into $L^q(G),$ that is \eqref{Lp-Lq:bound} holds, in the following cases: 
\begin{itemize}
        \item[(i)] if $1<p\leq q \leq 2$ and  \begin{equation}\label{Necessary:condition:4:intro}
   m\leq -Q \left( \frac{1}{p}-\frac{1}{q}+(1-\rho) \left(\frac{1}{q}-\frac{1}{2}\right)\right). \end{equation}    
\item[(ii)] if $2 \leq p \leq q<\infty$ and 

\begin{equation}\label{Necessary:condition:3:intro:}
   m\leq -Q\left( \frac{1}{p}-\frac{1}{q}+(1-\rho) \left(\frac{1}{2}-\frac{1}{p}\right)\right).
   \end{equation}
\end{itemize}
\end{itemize}
\end{theorem}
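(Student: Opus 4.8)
The plan is to reduce everything to three ingredients: (1) the already-established $L^p$-boundedness of operators in $\Psi^{0}_{\rho,\delta}(G\times\widehat{G})$ from \cite{CDR21JGA}; (2) the $L^p$-$L^q$ mapping properties of the Riesz potentials $\mathcal{R}^{-a/\nu}$ and, more generally, the Bessel potentials $(I+\mathcal{R})^{-a/\nu}$, which are Fourier multipliers on $G$; and (3) the sharpness assertions, which will come from testing on a suitable family of symbols (essentially Bessel potentials themselves) and exploiting the Sobolev embedding on $G$. For the sufficiency in the first bullet, given $A\in\Psi^{m}_{\rho,\delta}(G\times\widehat{G})$ with $m\le -Q(\frac1p-\frac1q)$, I would factor $A = \big(A\,(I+\mathcal{R})^{m/\nu}\big)\circ(I+\mathcal{R})^{-m/\nu}$ is the wrong way round; instead write $A = (I+\mathcal{R})^{m/\nu}\circ B$ where $B = (I+\mathcal{R})^{-m/\nu}\circ A$. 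By the composition formula for the global Hörmander calculus on graded Lie groups, $B\in\Psi^{0}_{\rho,\delta}(G\times\widehat{G})$ (here one uses $\delta\le\rho$, $\delta\neq1$, so that the calculus is closed under composition and the parametrix-type arguments apply), hence $B$ is bounded on $L^p(G)$ by \cite{CDR21JGA}. It then remains to see that the Bessel potential $(I+\mathcal{R})^{m/\nu}$ maps $L^p(G)\to L^q(G)$ for $m\le -Q(\frac1p-\frac1q)$; this is exactly the content of the (sharp) Sobolev embedding $L^p_{s}(G)\hookrightarrow L^q(G)$ with $s=Q(\frac1p-\frac1q)$, which for $1<p\le q<\infty$ follows from the $L^p$-$L^q$ bound for the Riesz potential $I_a=\mathcal{R}^{-a/\nu}$, $0<a<Q$, recorded in \cite{FischerRuzhanskyBook}, together with a Hardy–Littlewood–Sobolev/interpolation argument to pass from the homogeneous Riesz potential to the inhomogeneous Bessel potential. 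Composing, $A\colon L^p(G)\to L^q(G)$ is bounded, which is \eqref{Lp-Lq:bound}.

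For the second bullet (cases (i) and (ii)), the extra loss $(1-\rho)\big(\frac1q-\frac12\big)$ resp.\ $(1-\rho)\big(\frac12-\frac1p\big)$ is precisely the loss one pays to interpolate the sharp $L^2\to L^q$ (resp.\ $L^p\to L^2$) estimate — which is the $p=2$ or $q=2$ endpoint of the first bullet, so no loss there — against an $L^p\to L^p$ (resp.\ $L^q\to L^q$) estimate for operators in $\Psi^{m'}_{\rho,\delta}$ with the correct order shift. Concretely, in case (i), I would interpolate (complex interpolation of the operator, viewing $m$ as a parameter and shifting by Bessel potentials as above) between the endpoint $A_0\colon L^2\to L^q$ bounded when $m_0=-Q(\frac12-\frac1q)$, and $A_1\colon L^p\to L^p$ bounded when $m_1=-Q(1-\rho)|\frac1p-\frac12|$, the latter being the known $L^p$-boundedness threshold for $\Psi^{m_1}_{\rho,\delta}$ from \cite{CDR21JGA} (the Fefferman-type order $-Q(1-\rho)|\frac1p-\frac12|$). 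Choosing the interpolation parameter $\theta$ so that $\frac1{p_\theta}=\frac1p$ on one side and $\frac1{q_\theta}$ runs from $\frac1q$ to $\frac1p$, and tracking the order $m_\theta=(1-\theta)m_0+\theta m_1$, gives exactly \eqref{Necessary:condition:4:intro}; case (ii) is symmetric, interpolating $L^p\to L^2$ against $L^q\to L^q$. Again the reduction of a general $A\in\Psi^{m}_{\rho,\delta}$ to the right order is done by composing with a Bessel potential to land in $\Psi^{0}_{\rho,\delta}$ and invoking the $L^p$-boundedness theorem.

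For necessity in the first bullet, suppose \eqref{Lp-Lq:bound} holds for every $A\in\Psi^{m}_{\rho,\delta}$. It suffices to exhibit one such operator that fails to be $L^p$-$L^q$ bounded when $m>-Q(\frac1p-\frac1q)$, and the natural candidate is the (left-invariant) Bessel potential $A=(I+\mathcal{R})^{m/\nu}$ itself, whose symbol lies in $S^{m}_{1,0}(G\times\widehat{G})\subset S^{m}_{\rho,\delta}(G\times\widehat{G})$. Its $L^p$-$L^q$ boundedness is equivalent to the Sobolev embedding $L^p_{-m}(G)\hookrightarrow L^q(G)$, and one shows this fails for $-m<Q(\frac1p-\frac1q)$ by a scaling/dilation argument: testing on dilates $f(\delta_r\cdot)$ of a fixed bump function and letting $r\to0$ (local behaviour, forcing $-m\ge Q(\frac1p-\frac1q)$ up to the homogeneous threshold) — this is the homogeneous Riesz-potential sharpness from \cite{FischerRuzhanskyBook} — while the inhomogeneity is harmless for the lower bound. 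This simultaneously yields the claimed sharpness of the Sobolev embedding theorem advertised in the abstract. The main obstacle I anticipate is the bookkeeping in the interpolation step for (i)–(ii): one must be careful that complex interpolation of the scale $\{\Psi^{m}_{\rho,\delta}\}$ against itself (with varying $m$) is legitimate — this requires an analytic family of operators, obtained by inserting the analytic family of Bessel powers $(I+\mathcal{R})^{z/\nu}$ — and that the endpoint $L^p$-boundedness results one feeds in are stated with exactly the Fefferman order $-Q(1-\rho)|\frac1p-\frac12|$, uniformly enough to apply Stein interpolation; verifying the hypotheses of Stein's analytic interpolation theorem in the graded-group setting (admissible growth of the operator norms in the imaginary direction) is where the real work lies.
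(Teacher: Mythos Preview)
Your argument for the sufficiency in the first bullet has a genuine gap. You factor $A=(I+\mathcal{R})^{m/\nu}\circ B$ with $B\in\Psi^{0}_{\rho,\delta}(G\times\widehat{G})$ and then invoke \cite{CDR21JGA} to say $B$ is bounded on $L^p(G)$. But the $L^p$-boundedness theorem for the classes $\Psi^{-m}_{\rho,\delta}$ (Theorem~\ref{Lp1CardonaDelgadoRuzhansky2} in the paper) requires the order to satisfy $-m\leq -Q(1-\rho)\big|\tfrac{1}{p}-\tfrac{1}{2}\big|$; for an operator of order zero this forces $\rho=1$ or $p=2$. So your factorisation collapses as soon as $\rho<1$ and $p\neq 2$, which is precisely the interesting range. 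The paper's remedy is to route through $L^2$ rather than $L^p$: one writes $A=B_{-m_2}A_0B_{-m_1}$ with $m_1+m_2=m$, $m_1\leq -Q(\tfrac1p-\tfrac12)$, $m_2\leq -Q(\tfrac12-\tfrac1q)$, and $A_0=B_{m_2}AB_{m_1}\in\Psi^{0}_{\rho,\delta}$. Now $A_0$ is bounded on $L^2$ by the Calder\'on--Vaillancourt theorem, which holds for \emph{all} $0\leq\delta\leq\rho\leq1$, $\delta\neq1$, and the two Bessel potentials handle $L^p\to L^2$ and $L^2\to L^q$ via Lemma~\ref{Lemma:Bessel:potential:lplq}. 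The crucial point you missed is that the range $1<p\leq2\leq q<\infty$ always allows one to pass through $L^2$, where no loss in $\rho$ occurs.

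For the second bullet your interpolation scheme is both more complicated than necessary and has the endpoints confused: in case (i) you propose to interpolate against $L^2\to L^q$, but for $q\leq 2$ this is not covered by the first bullet (which needs $2\leq q$), so that endpoint is unavailable. The paper avoids interpolation altogether and uses the same factorisation trick: for (i) write $A=(AB_{m'})B_{-m'}$ with $m'=-Q(\tfrac1p-\tfrac1q)$, so that $B_{-m'}:L^p\to L^q$ by the Bessel bound, while $AB_{m'}\in\Psi^{m-m'}_{\rho,\delta}$ has exactly the order $m-m'\leq -Q(1-\rho)(\tfrac1q-\tfrac12)$ needed for $L^q$-boundedness by Theorem~\ref{Lp1CardonaDelgadoRuzhansky2}; case (ii) is the symmetric factorisation $A=B_{-m'}(B_{m'}A)$. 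No analytic families, no Stein interpolation. Your necessity argument via the Bessel potential $(I+\mathcal{R})^{m/\nu}$ is the same as the paper's.
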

\begin{remark} \label{rem1}  The order conditions in \eqref{Necessary:condition:3:intro},  \eqref{Necessary:condition:4:intro} and \eqref{Necessary:condition:3:intro:} can be written in a simplified way for $1<p,q<\infty$ as follows:
\begin{equation}\label{order:condition}
    m\leq -Q\left(   \frac{1}{p}-\frac{1}{q}+(1-\rho)\max\left\{ \frac{1}{2}-\frac{1}{p},\frac{1}{q}-\frac{1}{2},0\right\}\right),
\end{equation}where $Q$ is the homogeneous dimension of the group $G$.    
\end{remark}
\begin{remark}
    If $G=\mathbb{R}^n,$ the order condition in \eqref{order:condition} is sharp for Fourier multipliers, see H\"ormander \cite[Page 163]{Hor67}.
\end{remark}
\begin{remark}
    For the proof of the necessary and sufficient condition in \eqref{Necessary:condition:3:intro}, an essential tool is the sharpness of the Sobolev embedding theorem on graded Lie groups. More precisely, in Lemma \ref{Lemma:Bessel:potential:lplq} we prove that for $1<p,q<\infty,$ and for a Rokland operator  $\mathcal{R}$ of homogeneous degree $\nu>0$ on a graded Lie group $G,$ the Bessel operator $$B_{a}=(1+\mathcal{R})^{-\frac{a}{\nu}},$$ admits a bounded extension from $L^p(G)$ into $L^q(G),$ that is, the estimate
\begin{equation}\label{eqref:lplq:intro}
    \Vert B_{a} f\Vert_{L^q}\leq  C\Vert  f\Vert_{L^p}\,\,\,
\end{equation} holds, if and only if, $1<p<q<\infty$ and
\begin{equation}\label{Necessary:condition:2:intro}
   a\geq Q\left(\frac{1}{p}- \frac{1}{q}\right).
\end{equation}That \eqref{Necessary:condition:2:intro} is a sufficient condition for the boundedness of $B_a$ was proved in \cite{FischerRuzhanskyBook}. Our  contribution in Lemma \ref{Lemma:Bessel:potential:lplq} is then the proof of the {\it reverse} statement.   
\end{remark}

\section{Preliminaries} \label{sec2}
We begin by reviewing some basic definitions and results from
the theory of pseudo-differential operators  on graded Lie groups as developed  in \cite{FischerRuzhanskyBook} used in our analysis of $L^p$-$L^q$-bounds in the setting of graded Lie groups.  We introduce the necessary tools on the subject in the next subsections.
\subsection{H\"ormander classes on graded Lie groups} For the facts below we follow \cite{FischerRuzhanskyBook}. 

Let $G$ be a connected, simply connected nilpotent Lie group with Lie algebra
$\mathfrak{g}.$ We  always  assume that  $\mathfrak{g}$ is a real vector space of finite dimension. In this case, the exponential map 
$$\textnormal{exp}: \mathfrak{g}\rightarrow G  $$
is a global diffeomorphism that allows us to make the identification $G\simeq \mathbb{R}^n.$
The corresponding mapping $(x,y)\mapsto x\cdot y$ is a polynomial mapping and the Haar measure on $G$ is induced by the exponential mapping from the Lebesgue measure on $\mathbb{R}^n,$ where $n=\dim(G)$ is the topological dimension of $G.$

A {\it family of dilations } on the Lie algebra $\mathfrak{g}$ is a family of endomorphisms $D_{r}\in \textnormal{End}(\mathfrak{g}),$ $r>0,$ satisfying the  following properties:
\begin{itemize}
    \item $\forall r>0,$ $D_r$ is an  algebra automorphism, i.e. if $[\cdot,\cdot]$ denotes the Lie bracket on $\mathfrak{g},$ then, for any $r>0,$ we have that $$D_{r}(X+Y)=D_{r}X+D_{r}(Y),$$ and $$D_{r}[X,Y]=[D_rX,D_rY],$$ for all $X,Y\in \mathfrak{g}.$
    \item There exists a diagonalisable linear operator $A:\mathfrak{g}\rightarrow \mathfrak{g} $ such that $$D_{r}X=e^{\ln(r)A}X,\,\,\forall X\in \mathfrak{g}.$$ Such $A$ is called the {\it  dilation matrix} of the group.
\end{itemize}
A {\it homogeneous group} is a connected, simply connected nilpotent Lie group $G$
such that the Lie algebra $\mathfrak{g}$ is endowed with a family of dilations $\{D_{r}\}_{ r>0}.$ In this case the mappings
$$\textnormal{exp}\circ D_{r}\circ \textnormal{exp}^{-1}: G\rightarrow G  $$ are group automorphisms on $G$ that we also denote by $D_r.$ Moreover, if $$\textnormal{Spectrum}(A)=\{\nu_1,\cdots \nu_n\}$$ is the set of eigenvalues of the dilation matrix $A$, the number 
$$Q=\textnormal{Tr}(A):=\nu_1+\cdots+\nu_n$$ is called the {\it homogeneous dimension of } $G.$

Let  $\pi$ be  a continuous, unitary and irreducible  representation of $G$ on a separable Hilbert space $H_{\pi},$ this means that,
\begin{itemize}
    \item[1.] $\pi\in \textnormal{Hom}(G, \textnormal{U}(H_{\pi})),$  i.e. $\pi(gg')=\pi(g)\pi(g')$ and for the  adjoint of $\pi(g),$ $\pi(g)^*=\pi(g^{-1}),$ for every $g,g'\in G.$
    \item[2.] The function $(g,v)\mapsto \pi(g)v, $ from $G\times H_\pi$ into $H_\pi$ is a continuous mapping.
    \item[3.] For every $g\in G,$ and for a vector subspace $W_\pi\subset H_\pi,$ if $\pi(g)W_{\pi}\subset W_{\pi},$ then $W_\pi=H_\pi$ or $W_\pi=\{0\}.$
\end{itemize} Let $\textnormal{Rep}(G)$ be the family of unitary, continuous and irreducible representations of $G.$ The relation, 
\begin{equation*}
    \pi_1\sim \pi_2\Longleftrightarrow \exists B\in \mathscr{B}(H_{\pi_1},H_{\pi_2}): \,\forall g\in G, \, B\pi_{1}(g)B^{-1}=\pi_2(g) 
\end{equation*}  is an equivalence relation and the unitary dual of $G,$ denoted by $\widehat{G}$ is defined via
$
    \widehat{G}:={\textnormal{Rep}(G)}/{\sim}.
$ We always identify any representation with its equivalence class $\pi\simeq [\pi].$ One reason to do this is that in the setting of nilpotent Lie groups the unitary dual is a continuous set.

We are going to denote by  $d\pi$ the {\it Plancherel measure} on $\widehat{G}.$ 
The group Fourier transform $\mathscr{F}_{G}(f):=\widehat{f}$ of $f\in \mathscr{S}(G)\cong \mathscr{S}(\mathbb{R}^n), $  at $\pi\in\widehat{G},$ is defined by 
\begin{equation*}
    \widehat{f}(\pi)=\int\limits_{G}f(g)\pi(g)^*dg:H_\pi\rightarrow H_\pi,\textnormal{   and   }\mathscr{F}_{G}:\mathscr{S}(G)\rightarrow \mathscr{S}(\widehat{G}):=\mathscr{F}_{G}(\mathscr{S}(G)).
\end{equation*}
Under the identification, $\pi\simeq  [\pi]=\{\pi':\pi\sim \pi'\}$,  the Kirillov trace character $\Theta_\pi$ defined by  $$f\in \mathscr{S}(G)\mapsto (\Theta_{\pi},f):
=\textnormal{ Tr }(\widehat{f}(\pi)),$$ is a tempered distribution on $G.$ In particular, the identity
$$
    f(e_G)=\int\limits_{\widehat{G}}\textnormal{ Tr }(\widehat{f}(\pi))d\pi,
$$ 
implies the {\it Fourier inversion formula} $f=\mathscr{F}_G^{-1}(\widehat{f}),$ where
\begin{equation*}
    (\mathscr{F}_G^{-1}\sigma)(g):=\int\limits_{\widehat{G}}\textnormal{ Tr }[\pi(g)\sigma(\pi)]d\pi,\,\,g\in G,\,\,\,\,\mathscr{F}_G^{-1}:\mathscr{S}(\widehat{G})\rightarrow\mathscr{S}(G),
\end{equation*}is the {\it inverse Fourier  transform}. We recall that  the Plancherel theorem takes the form $$\Vert f\Vert_{L^2(G)}=\Vert \widehat{f}\Vert_{L^2(\widehat{G})},$$  where we have denoted by   $$ L^2(\widehat{G}):=\int\limits_{\widehat{G}}H_\pi\otimes H_{\pi}^*d\pi,$$ the Hilbert space endowed with the norm: $$\Vert \sigma\Vert_{L^2(\widehat{G})}=\left(\int_{\widehat{G}}\Vert \sigma(\pi)\Vert_{\textnormal{HS}}^2d\pi\right)^{\frac{1}{2}}.$$

An important class of operators that interact with the dilations on the group $G$ are  {\it homogeneous operators}.  We recall that a linear operator $T:C^\infty(G)\rightarrow \mathscr{D}'(G)$ is {\it homogeneous} of  degree $\nu\in \mathbb{C}$ if for every $r>0$ the identity
\begin{equation*}
T(f\circ D_{r})=r^{\nu}(Tf)\circ D_{r}
\end{equation*}
holds for every $f\in \mathscr{D}(G). $

For every unitary representation $\pi\in\widehat{G},$ acting as $\pi:G\rightarrow U({H}_{\pi}),$ on a representation space ${H}_{\pi},$ we denote by ${H}_{\pi}^{\infty}$ the set of {\it smooth vectors} on ${H}_{\pi},$ that consists of all elements $v\in {H}_{\pi}$ such that the function $x\mapsto \pi(x)v,$ $\pi\in \widehat{G},$ is smooth.

A {\it Rockland operator} is a positive left-invariant differential operator $\mathcal{R}$ which is homogeneous of positive degree $\nu=\nu_{\mathcal{R}}$ and such that, for every unitary irreducible non-trivial representation $\pi\in \widehat{G},$ $\pi(\mathcal{R})$ is an injective operator on the space of smooth vectors ${H}_{\pi}^{\infty};$ Here, $\sigma_{\mathcal{R}}(\pi)=\pi(\mathcal{R})$ is the symbol associated to $\mathcal{R}.$ It coincides with the infinitesimal representation of $\mathcal{R}$ as an element of the universal enveloping algebra.

It can be shown that a Lie group $G$ is graded if and only if there exists a differential Rockland operator on $G.$ 
\begin{remark}
If the Rockland operator $$\mathcal{R}:=\sum_{[\alpha]=\nu}a_{\alpha}X^{\alpha}$$ is formally self-adjoint, then $\mathcal{R}$ and $$\pi(\mathcal{R})=\sum_{[\alpha]=\nu}a_{\alpha}\pi(X^{\alpha})$$ admit self-adjoint extensions on $L^{2}(G)$ and on ${H}_{\pi},$ respectively, for a.e. $\pi\in \widehat{G}$. Now if we use the same notation for their (unbounded) self-adjoint
extensions and we denote by $E$ and $E_{\pi}$  their respective spectral measures, we will denote by
$$ \phi(\mathcal{R})=\int\limits_{-\infty}^{\infty}\phi(\lambda) dE(\lambda),\,\,\,\textnormal{and}\,\,\,\pi(\phi(\mathcal{R}))\equiv \phi(\pi(\mathcal{R}))=\int\limits_{-\infty}^{\infty}\phi(\lambda) dE_{\pi}(\lambda), $$ the measurable functions defined by the spectral functional calculus associated with $\mathcal{R}$ and $\pi(\mathcal{R}),$ respectively. 
In general, we will reserve the notation ${E_A(\lambda)}_{0\leq\lambda<\infty}$ for the spectral measure associated with a {\it positive} and self-adjoint operator $A$ on a Hilbert space $H.$ 
\end{remark}

For  any multi-index $\alpha\in \mathbb{N}_0^n,$ and for an arbitrary family $\{X_1,\cdots, X_n\},$ of left-invariant  vector-fields  compatible with respect to the graded structure of the Lie algebra we will use the notation
\begin{equation}
    [\alpha]:=\sum_{j=1}^n\nu_j\alpha_j,
\end{equation}for the homogeneous degree of the operator $X^{\alpha}:=X_1^{\alpha_1}\cdots X_{n}^{\alpha_n}.$ 

In order to present a consistent definition of pseudo-differential operators, namely, a quantisation formula, as developed in \cite{FischerRuzhanskyBook} (see  \eqref{Quantization}), we need   to define a suitable class of Sobolev spaces on the unitary dual $\widehat{G}$ acting on the set of smooth vectors $H_{\pi}^{\infty},$ for every representation space $H_{\pi}.$ To record this notion, let $\mathcal{R}$ be  a positive Rockland operator on $G$ of homogeneous degree $\nu>0.$
 \begin{definition}[Sobolev spaces on  $H_{\pi}^{\infty}$] Let $\pi_1\in \textnormal{Rep}(G),$ and let $a\in \mathbb{R}.$ We denote by $H_{\pi_1}^a,$ the Hilbert space obtained  by the completion of $H_{\pi_1}^\infty$ with respect to the norm
 \begin{equation*}
     \Vert v \Vert_{H_{\pi_1}^a}=\Vert\pi_1(1+\mathcal{R})^{\frac{a}{\nu}} v\Vert_{H_{\pi_1}}.
 \end{equation*}   
 \end{definition}
 
 In order to introduce the general notion of a symbol, that in our contexts are operators acting on Hilbert spaces (the corresponding representation spaces), as the one developed in \cite{FischerRuzhanskyBook}, we require the following definition.
 
\begin{definition}
A $\widehat{G}$-field of operators $$\sigma=\{\sigma(\pi):\pi\in \widehat{G}\}$$ defined on the class of smooth vectors {is defined} on some Sobolev space ${H}_\pi^a$ when for each unitary representation $\pi_1\in \textnormal{Rep}(G),$ such that $\pi_1\in \pi\in \widehat{G},$ the operator $\sigma(\pi_1)$ is bounded from $H^a_{\pi_1}$ to $H_{\pi_{1}}$ in the sense that
\begin{equation*}
    \sup_{ \Vert v\Vert_{H_{\pi_1}^a}=1}\Vert \sigma(\pi_1)v \Vert_{H_\pi}<\infty,
\end{equation*}holds,  and satisfying for every two elements $\sigma(\pi_1)$ and $\sigma(\pi_2)$ in $\sigma(\pi)$ the property
\begin{equation*}
  \textnormal{If  }  \pi_{1}\sim \pi_2  \textnormal{  then  }   \sigma(\pi_1)\sim \sigma(\pi_2). 
\end{equation*}
\end{definition}
 We recall that the {\it inhomogenous Sobolev space } $L^{p}_{a}(G)$ is  defined by the completion of $\mathscr{D}(G)$ with respect to the norm (see \cite[Chapter 4]{FischerRuzhanskyBook})
\begin{equation}\label{L2ab2}
    \Vert f \Vert_{L^{p}_{a}(G)}=\Vert (1+\mathcal{R})^{\frac{a}{\nu}}f\Vert_{L^p(G)},
\end{equation} for $a\in \mathbb{R},$ and for any $0<p<\infty.$ On the other hand,  the {\it homogenous Sobolev space } $\dot{L}^{p}_{a}(G)$ is  defined by the completion of $\mathscr{D}(G)\cap \textnormal{Dom}(\mathcal{R}^{\frac{a}{\nu}}),$ with respect to the norm (see \cite[Chapter 4]{FischerRuzhanskyBook})
\begin{equation}\label{L2ab2}
    \Vert f \Vert_{\dot{L}^{p}_{a}(G)}=\Vert \mathcal{R}^{\frac{a}{\nu}}f\Vert_{L^p(G)},
\end{equation} for $a\in \mathbb{R},$ and for any $0<p<\infty.$

\begin{definition}
A $\widehat{G}$-field of operators  defined on smooth vectors {with range} in the Sobolev space $H_{\pi}^a$ is a family of classes of operators $\sigma=\{\sigma(\pi):\pi\in \widehat{G}\}$ where
\begin{equation*}
    \sigma(\pi):=\{\sigma(\pi_1):H^{\infty}_{\pi_1}\rightarrow H_{\pi}^a,\,\,\pi_1\in \pi\},
\end{equation*} for every $\pi\in \widehat{G}$ viewed as a subset of $\textnormal{Rep}(G),$ satisfying for every two elements $\sigma(\pi_1)$ and $\sigma(\pi_2)$ in $\sigma(\pi):$
\begin{equation*}
  \textnormal{If  }  \pi_{1}\sim \pi_2  \textnormal{  then  }   \sigma(\pi_1)\sim \sigma(\pi_2). 
\end{equation*}
\end{definition}
  The following notion  will be useful in order to use the general theory of non-commutative integration. 
\begin{definition}
A $\widehat{G}$-field of operators  defined on smooth vectors with range in the Sobolev space $H_\pi^a$ {is measurable}  when for some (and hence for any) $\pi_1\in \pi$ and any vector $v_{\pi_1}\in H_{\pi_1}^\infty,$ as $\pi\in \widehat{G},$ the resulting field $\{\sigma(\pi)v_\pi:\pi\in\widehat{G}\},$ 
is $d\pi$-measurable and
\begin{equation*}
    \int\limits_{\widehat{G} }\Vert v_\pi \Vert^2_{H_\pi^a}d\pi=\int\limits_{\widehat{G} }\Vert\pi(1+\mathcal{R})^{\frac{a}{\nu}} v_\pi \Vert^2_{H_\pi}d\pi<\infty.
\end{equation*}
\end{definition}
\begin{remark}
We always assume that a $\widehat{G}$-field of operators  defined on smooth vectors {with range} in the Sobolev space $H_{\pi}^a$ is $d\pi$-measurable.
\end{remark} The $\widehat{G}$-fields of operators associated to Rockland operators can be defined as follows.
\begin{definition}
Let $L^2_a(\widehat{G})$ denote the space of measurable fields of operators $\sigma$ with range in $H_\pi^a,$ that is,
\begin{equation*}
    \sigma=\{\sigma(\pi):H_\pi^\infty\rightarrow H_\pi^a\}, \textnormal{ with }\{\pi(1+\mathcal{R})^{\frac{a}{\nu}}\sigma(\pi):\pi\in \widehat{G}\}\in L^2(\widehat{G}),
\end{equation*}for one (and hence for any) Rockland operator of homogeneous degree $\nu.$ We also denote
\begin{equation*}
    \Vert \sigma\Vert_{L^2_a(\widehat{G})}:=\Vert \pi(1+\mathcal{R})^{\frac{a}{\nu}}\sigma(\pi)\Vert_{L^2(\widehat{G})}.
\end{equation*}
\end{definition} By using the notation above, we will introduce a family of function spaces required to define $\widehat{G}$-fields of operators (that will be used to define the symbol of a pseudo-differential operator, see Definition \ref{SRCK}).
\begin{definition}[The spaces $\mathscr{L}_{L}(L^2_a(G),L^2_b(G)),$ $\mathcal{K}_{a,b}(G)$ and $L^\infty_{a,b}(\widehat{G})$]
\hspace{0.1cm}
\begin{itemize}
    \item The space $\mathscr{L}_{L}(L^2_a(G),L^2_b(G)) $ consists of all  left-invariant linear operators $T$  such that  $T:L^2_a(G)\rightarrow L^2_b(G) $ extends to a bounded operator.
    \item The space  $\mathcal{K}_{a,b}(G)$ is the family of all right convolution kernels of elements in $  \mathscr{L}_{L}(L^2_a(G),L^2_b(G))  ,$ i.e. $k=T\delta\in \mathcal{K}_{a,b}(G)$ if and only if  $T\in
    \mathscr{L}_{L}(L^2_a(G),L^2_b(G)) .$ 
    \item We also define the space $L^\infty_{a,b}(\widehat{G})$ of symbols by the following condition:  $\sigma\in L^\infty_{a,b}(\widehat{G})$ if 
\begin{equation*}
    \Vert \pi(1+\mathcal{R})^{\frac{b}{\nu}}\sigma(\pi)\pi(1+\mathcal{R})^{-\frac{a}{\nu}} \Vert_{L^\infty(\widehat{G})}:=\sup_{\pi\in\widehat{G}}\Vert \pi(1+\mathcal{R})^{\frac{b}{\nu}}\sigma(\pi)\pi(1+\mathcal{R})^{-\frac{a}{\nu}} \Vert_{\mathscr{B}(H_\pi)}<\infty.
\end{equation*}
\end{itemize} In this case $T_\sigma:L^2_a(G)\rightarrow L^2_b(G)$ extends to a bounded operator with 
\begin{equation*}
  \Vert \sigma\Vert _{L^\infty_{a,b}(\widehat{G})}= \Vert T_{\sigma} \Vert_{\mathscr{L}(L^2_a(G),L^2_b(G))},
\end{equation*} and note that   $\sigma\in L^\infty_{a,b}(\widehat{G})$ if and only if $k:=\mathscr{F}_{G}^{-1}\sigma \in \mathcal{K}_{a,b}(G).$
\end{definition}
   With the previous definitions, we will introduce the type of symbols used when quantising on nilpotent Lie groups.
   
   \begin{definition}[Symbols and right-convolution kernels]\label{SRCK} A {symbol} is a field of operators $\{\sigma(x,\pi):H_\pi^\infty\rightarrow H_\pi^\infty,\,\,\pi\in\widehat{G}\},$ depending on $x\in G,$ such that 
   \begin{equation*}
       \sigma(x,\cdot)=\{\sigma(x,\pi):H_\pi^\infty\rightarrow H_\pi^\infty,\,\,\pi\in\widehat{G}\}\in L^\infty_{a,b}(\widehat{G})
   \end{equation*}for some $a,b\in \mathbb{R}.$ The {right-convolution kernel} $k\in C^\infty(G,\mathscr{S}'(G))$ associated with $\sigma$ is defined, via the inverse Fourier transform on the group by
   \begin{equation*}
       x\mapsto k(x)\equiv k_{x}:=\mathscr{F}_{G}^{-1}(\sigma(x,\cdot)): G\rightarrow\mathscr{S}'(G).
   \end{equation*}
   \end{definition}
   Definition \ref{SRCK} in this section allows us to establish the following theorem, which gives sense to the quantization of pseudo-differential operators in the setting of graded Lie groups  (see Theorem 5.1.39 of \cite{FischerRuzhanskyBook}).
   \begin{theorem}\label{thetheoremofsymbol}
   Let us consider a symbol $\sigma$ and its associated right-convolution  kernel $k.$ For every $f\in \mathscr{S}(G),$ let us define the operator $A$ acting on $\mathscr{S}(G),$ via 
   \begin{equation}\label{pseudo}
       Af(x)=(f\ast k_{x})(x),\,\,\,\,\,x\in G.
   \end{equation}Then $Af\in C^\infty,$ and 
   \begin{equation}\label{Quantization}
       Af(x)=\int\limits_{\widehat{G}}\textnormal{ Tr }(\pi(x)\sigma(x,\pi)\widehat{f}(\pi))d\pi.
   \end{equation}
   \end{theorem}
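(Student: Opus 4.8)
The plan is to reduce the identity \eqref{Quantization} to the Fourier inversion formula applied, for each fixed $x$, to the convolution $f\ast k_{x}$, and then to obtain the smoothness of $Af$ by differentiating under the Plancherel integral. First I would fix $x\in G$. By Definition \ref{SRCK} we have $\sigma(x,\cdot)\in L^\infty_{a,b}(\widehat{G})$ for some $a,b\in\mathbb{R}$, so $k_x=\mathscr{F}_G^{-1}(\sigma(x,\cdot))$ is the right-convolution kernel of the bounded operator $T_{\sigma(x,\cdot)}:L^2_a(G)\to L^2_b(G)$, and in particular $k_x\in\mathcal{K}_{a,b}(G)\subset\mathscr{S}'(G)$. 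Since $f\in\mathscr{S}(G)\subset L^2_a(G)$, the convolution $f\ast k_x=T_{\sigma(x,\cdot)}f$ belongs to $L^2_b(G)\subset\mathscr{S}'(G)$; moreover the convolution of a Schwartz function with a tempered distribution is a smooth function, so $f\ast k_x\in C^\infty(G)$ and the pointwise value $(f\ast k_x)(x)$ appearing in \eqref{pseudo} is well defined.

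Next I would pass to the group Fourier transform. With the convention $\widehat{g}(\pi)=\int_G g(y)\pi(y)^*\,dy$ and unimodularity of $G$, one has $\mathscr{F}_G(g\ast h)(\pi)=\widehat{h}(\pi)\widehat{g}(\pi)$; applying this with $h=k_x$ gives $\mathscr{F}_G(f\ast k_x)(\pi)=\sigma(x,\pi)\widehat{f}(\pi)$. Since $\widehat{f}\in\mathscr{S}(\widehat{G})$ decays, together with all its derivatives, faster than any power of the spectral parameter of $\pi(1+\mathcal{R})$, while $\sigma(x,\cdot)\in L^\infty_{a,b}(\widehat{G})$ grows at most like such a power, the field $\pi\mapsto\sigma(x,\pi)\widehat{f}(\pi)$ lies in $\mathscr{S}(\widehat{G})$, and the Fourier inversion formula recalled in Section \ref{sec2} gives
\begin{equation*}
(f\ast k_x)(y)=\int_{\widehat{G}}\textnormal{Tr}\bigl[\pi(y)\,\sigma(x,\pi)\,\widehat{f}(\pi)\bigr]\,d\pi,\qquad y\in G.
\end{equation*}
Taking $y=x$ and recalling \eqref{pseudo} yields \eqref{Quantization}, since $x$ was arbitrary.

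It remains to prove $Af\in C^\infty(G)$, which I would deduce directly from \eqref{Quantization}. Applying a left-invariant monomial $X^\beta$ in the variable $x$ to the integrand and using the Leibniz rule produces a finite sum of terms of the form $\textnormal{Tr}\bigl[\pi(x)\,\pi(X^{\beta_1})\,(X^{\beta_2}_x\sigma)(x,\pi)\,\widehat{f}(\pi)\bigr]$; here $\pi(X^{\beta_1})$ is dominated by a power of the spectral parameter of $\pi(1+\mathcal{R})$, which is absorbed by the rapid decay of $\widehat{f}(\pi)$, and $(X^{\beta_2}_x\sigma)(x,\cdot)$ still lies in some $L^\infty_{a',b'}(\widehat{G})$ with bounds locally uniform in $x$ because $\sigma\in C^\infty(G,\mathscr{S}'(G))$. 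Hence these integrals converge absolutely and uniformly for $x$ in compact subsets of $G$, which legitimises differentiation under the integral sign and gives $Af\in C^\infty(G)$.

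I expect the main obstacle to be the quantitative control underlying the last two steps: bounding, uniformly for $x$ in compact sets, the Hilbert--Schmidt and trace-class sizes of $\sigma(x,\pi)\widehat{f}(\pi)$ and of its $x$-derivatives in terms of the functional calculus of $\mathcal{R}$, so that the Plancherel integral in \eqref{Quantization} both converges and may be differentiated term by term; once these estimates are in place the remaining ingredients, namely the Fourier transform of a convolution and the Fourier inversion formula, are routine.
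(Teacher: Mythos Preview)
The paper does not give a proof of this statement: it is recorded in the preliminaries with a citation to Theorem~5.1.39 of \cite{FischerRuzhanskyBook}, so there is no in-paper argument to compare against. Your outline is the standard route and is correct in substance---compute $\mathscr{F}_G(f\ast k_x)(\pi)=\sigma(x,\pi)\widehat{f}(\pi)$, apply Fourier inversion at $y=x$, then justify differentiation under the Plancherel integral; the one technical point worth tightening is that control of $X_x^{\beta}\sigma(x,\cdot)$ in some $L^\infty_{a',b'}(\widehat{G})$, locally uniformly in $x$, is not a consequence of the bare condition $\sigma(x,\cdot)\in L^\infty_{a,b}(\widehat{G})$ but comes from the assumption $k\in C^\infty(G,\mathscr{S}'(G))$ built into Definition~\ref{SRCK}.
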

   
 Theorem   \ref{thetheoremofsymbol} motivates the following definition.
 \begin{definition}\label{DefiPSDO}
A continuous linear operator $A:C^\infty(G)\rightarrow\mathscr{D}'(G)$ with Schwartz kernel $K_A\in C^{\infty}(G)\widehat{\otimes}_{\pi} \mathscr{D}'(G),$ is a pseudo-differential operator, if
 there exists a  \textit{symbol}, which is a field of operators $\{\sigma(x,\pi):H_\pi^\infty\rightarrow H_\pi^\infty,\,\,\pi\in\widehat{G}\},$ depending on $x\in G,$ such that 
   \begin{equation*}
       \sigma(x,\cdot)=\{\sigma(x,\pi):H_\pi^\infty\rightarrow H_\pi^\infty,\,\,\pi\in\widehat{G}\}\in L^\infty_{a,b}(\widehat{G})
   \end{equation*}for some $a,b\in \mathbb{R},$ such that, the Schwartz kernel of $A$ is given by 
\begin{equation*}
    K_{A}(x,y)=\int\limits_{\widehat{G}}\textnormal{ Tr }(\pi(y^{-1}x)\sigma(x,\pi))d\pi=k_{x}(y^{-1}x).
\end{equation*}
{{In this case, we use  the notation
\begin{equation*}
    A:=\textnormal{Op}(\sigma),
\end{equation*}     
to indicate that $A$ is the pseudo-differential operator associated with symbol  $\sigma.$}}
\end{definition}
On the other hand, it is also known that one can write a global symbol in terms of its corresponding pseudo-differential operator (see \cite[Theorem 3.2]{CDR21JGA}). Indeed, let $A:C^\infty(G) \rightarrow C^\infty(G)$ be a continuous linear operator with symbol 
$$\sigma:=\{\sigma(x,\pi):H_\pi^\infty\rightarrow H_\pi^\infty,\,\,\pi\in\widehat{G},\,\,\, x\in G\}
 $$ such that 
 $$ Af(x)=\int\limits_{\widehat{G}}\textnormal{ Tr }(\pi(x)\sigma(x,\pi)\widehat{f}(\pi))d\pi$$
 for every $f \in \mathscr{S}(G)$ and a.e. $(x, \pi) \in G \times \widehat{G}.$ Then, we have 
 $$\sigma(x, \pi)=\pi(x)^* A\pi(x)$$
for every $x \in G$ and a. e. $\pi \in \widehat{G}$ provided that the operator $A\pi(x)$ is a densely defined operator on $H^\infty_\pi.$ 

The main tool in the construction of the global H\"ormander classes is the notion of difference operators. 
 Indeed, for every smooth function $q\in C^\infty(G)$ and $\sigma\in L^\infty_{a,b}(G),$ where $a,b\in\mathbb{R},$ the difference operator $\Delta_q$ acts on $\sigma$ according to the formula (see Definition 5.2.1 of \cite{FischerRuzhanskyBook}),
 \begin{equation*}
     \Delta_q\sigma(\pi)\equiv [\Delta_q\sigma](\pi):=\mathscr{F}_{G}(qf)(\pi),\,\pi\in\widehat{G},\textnormal{ where }f:=\mathscr{F}_G^{-1}\sigma\,\,.
 \end{equation*}
We will reserve the notation $\Delta^{\alpha}$ for the difference operators defined by the functions $$q_{\alpha}(x):=x^\alpha,\,\,\alpha\in \mathbb{N}_0^n,$$ while we denote  by $\tilde{\Delta}^{\alpha}$ the difference operators associated with the functions   $\tilde{q}_{\alpha}(x)=(x^{-1})^\alpha$. In particular, we have the Leibnitz rule,
\begin{equation}\label{differenceespcia;l}
    \Delta^{\alpha}(\sigma\tau)=\sum_{\alpha_1+\alpha_2=\alpha}c_{\alpha_1,\alpha_2}\Delta^{\alpha_1}(\sigma)\Delta^{\alpha_2}(\tau),\,\,\,\sigma,\tau\in L^\infty_{a,b}(\widehat{G}).
\end{equation}

In terms of these difference operators, the global H\"ormander classes introduced in \cite{FischerRuzhanskyBook} can be defined as follows.
 Let:
 \begin{itemize}
     \item $0\leq \delta,\rho\leq 1,$ and
     \item let $\mathcal{R}$ be a positive Rockland operator of homogeneous degree $\nu>0,$
 \end{itemize}
  if $m\in \mathbb{R},$ we say that the symbol $\sigma\in L^\infty_{a,b}(\widehat{G}), $ where $a,b\in\mathbb{R},$ belongs to the $(\rho,\delta)$-H\"ormander class of order $m,$ $S^m_{\rho,\delta}(G\times \widehat{G}),$ if for all $\gamma\in \mathbb{R},$ the following conditions
\begin{equation}\label{seminorm}
   p_{\alpha,\beta,\gamma,m}(\sigma)= \operatornamewithlimits{ess\, sup}_{(x,\pi)\in G\times \widehat{G}}\Vert \pi(1+\mathcal{R})^{\frac{\rho [\alpha] -\delta [\beta] -m-\gamma}{\nu}}[X_{x}^\beta \Delta^{\alpha}\sigma(x,\pi)] \pi(1+\mathcal{R})^{\frac{\gamma}{\nu}}\Vert_{\textnormal{op}}<\infty,
\end{equation}
hold true for all $\alpha$ and $\beta$ in $\mathbb{N}_0^n.$ The resulting class $S^m_{\rho,\delta}(G\times \widehat{G}),$ does not depend on the choice of the Rockland operator $\mathcal{R}.$ In particular (see Theorem 5.5.20 of \cite{FischerRuzhanskyBook}), the following facts are equivalent: 
\begin{itemize}
    \item[(A)] $\forall \alpha,\beta\in \mathbb{N}_{0}^n, \forall\gamma\in \mathbb{R}, $   $p_{\alpha,\beta,\gamma,m}(\sigma)<\infty;$
    
    \item[(B)]  $\forall \alpha,\beta\in \mathbb{N}_{0}^n, $   $p_{\alpha,\beta,0,m}(\sigma)<\infty;$
    
    \item[(C)]  $\forall \alpha,\beta\in \mathbb{N}_{0}^n, $   $p_{\alpha,\beta,m+\delta [\beta] -\rho [\alpha] ,m}(\sigma)<\infty;$
    \item[(D)] $\sigma\in {S}^{m}_{\rho,\delta}(G\times \widehat{G});$
\end{itemize}and in view of the additional Theorem 13.16 of \cite{CR20}, the conditions (A), (B), (C) and (D) are equivalent to the following one:

\begin{itemize}
    \item[(E)]  $\forall \alpha,\beta\in \mathbb{N}_{0}^n, \exists \gamma_0\in \mathbb{R}, $   $p_{\alpha,\beta,\gamma_0,m}(\sigma)<\infty.$
\end{itemize}

 We will denote,
\begin{equation}
     \Vert \sigma\Vert_{k\,,S^{m}_{\rho,\delta}}:= \max_{ [\alpha] + [\beta] \leq k}\{  p_{\alpha,\beta,0,m}(\sigma)\}.
\end{equation}
\begin{remark}
In the abelian case $G=\mathbb{R}^n,$ endowed with its natural structure of the abelian  group, and with $$\mathcal{R}=-\Delta_{x}, \,x\in \mathbb{R}^n,$$ with $\Delta_{x}=\sum_{j=1}^{n}\partial_{x_i}^{2}$ being the usual Laplace operator on $\mathbb{R}^n,$ the classes defined via \eqref{seminorm}, agree with the well known H\"ormander classes on $\mathbb{R}^n$ (see e.g. H\"ormander \cite[Vol. 3]{HormanderBook34}). In this case the difference operators are the partial derivatives on $\mathbb{R}^n,$ (see Remark 5.2.13 and Example 5.2.6 of \cite{FischerRuzhanskyBook}).  
\end{remark}
For an arbitrary graded Lie group, the H\"ormander classes $S^m_{\rho,\delta}(G\times \widehat{G}),$ $m\in\mathbb{R},$ provide a symbolic calculus closed under compositions, adjoints, and existence of parametrices. The following theorem summarises the composition and the adjoint rules for global operators  as well as the Calder\'on-Vaillancourth theorem. 
\begin{theorem}\label{calculus} Let $0\leqslant \delta<\rho\leqslant 1,$ and let us denote $\Psi^{m}_{\rho,\delta}=\Psi^{m}_{\rho,\delta}(G\times \widehat{G}):=\textnormal{Op}({S}^{m}_{\rho,\delta}(G\times \widehat{G})),$ for every $m\in \mathbb{R}.$ Then,
\begin{itemize}
    \item [(i)] The mapping $A\mapsto A^{*}:\Psi^{m}_{\rho,\delta}\rightarrow \Psi^{m}_{\rho,\delta}$ is a continuous linear mapping between Fr\'echet spaces and  the  symbol of $A^*,$ $\sigma_{A^*}(x,\pi)\equiv \widehat{A^{*}}(x,\pi) $ satisfies the asymptotic expansion,
 \begin{equation*}
    \widehat{A^{*}}(x,\pi)\sim \sum_{|\alpha|= 0}^\infty\Delta^{\alpha}X_x^\alpha (\widehat{A}(x,\pi)^{*}).
 \end{equation*} This means that, for every $N\in \mathbb{N},$ and all $\ell\in \mathbb{N},$
\begin{equation*}
   \Small{ \Delta^{\alpha_\ell}X_x^\beta\left(\widehat{A^{*}}(x,\pi)-\sum_{|\alpha|\leqslant N}\Delta^\alpha X_x^\alpha (\widehat{A}(x,\pi)^{*}) \right)\in {S}^{m-(\rho-\delta)(N+1)-\rho\ell+\delta[\beta]}_{\rho,\delta}(G\times\widehat{G}) },
\end{equation*} where $[\alpha_\ell]=\ell.$
\item [(ii)] The mapping $(A_1,A_2)\mapsto A_1\circ A_2: \Psi^{m_1}_{\rho,\delta}\times \Psi^{m_2}_{\rho,\delta}\rightarrow \Psi^{m_3}_{\rho,\delta}$ is a continuous bilinear mapping between Fr\'echet spaces, and the symbol of $A=A_{1}\circ A_2,$ satisfies the asymptotic expansion,
\begin{equation*}
    \sigma_A(x,\pi)\sim \sum_{|\alpha|= 0}^\infty(\Delta^\alpha\widehat{A}_{1}(x,\pi))(X_x^\alpha \widehat{A}_2(x,\pi)),
\end{equation*}this means that, for every $N\in \mathbb{N},$ and all $\ell \in\mathbb{N},$
\begin{align*}
    &\Delta^{\alpha_\ell}X_x^\beta\left(\sigma_A(x,\pi)-\sum_{|\alpha|\leqslant N}  (\Delta^{\alpha}\widehat{A}_{1}(x,\pi))(X_x^\alpha \widehat{A}_2(x,\pi))  \right)\\
    &\hspace{2cm}\in {S}^{m_1+m_2-(\rho-\delta)(N+1)-\rho\ell+\delta[\beta]}_{\rho,\delta}(G\times \widehat{G}),
\end{align*}for every  $\alpha_\ell \in \mathbb{N}_0^n$ with $[\alpha_\ell]=\ell.$
\item [(iii)] For  $0\leqslant \delta\leq  \rho\leqslant    1,$  $\delta\neq 1,$ let us consider a continuous linear operator $A:C^\infty(G)\rightarrow\mathscr{D}'(G)$ with symbol  $\sigma\in {S}^{0}_{\rho,\delta}(G\times \widehat{G})$. Then $A$ extends to a bounded operator from $L^2(G)$ to  $L^2(G).$ 
\end{itemize}
\end{theorem}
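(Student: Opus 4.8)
The plan is to recover all three assertions from the Kohn--Nirenberg quantisation on graded Lie groups of \cite{FischerRuzhanskyBook} by adapting the classical symbolic-calculus scheme to the non-commutative Fourier analysis recalled above: for (i) and (ii) a Taylor expansion in the spatial variable combined with an asymptotic-summation argument, and for (iii) a dyadic decomposition generated by the Rockland operator followed by a Cotlar--Stein almost-orthogonality estimate. I expect (iii) to be the genuinely hard point.

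For the adjoint rule (i), I would start from the quantisation identity \eqref{Quantization} together with the pairing $\langle Af,g\rangle_{L^2(G)}=\langle f,A^{*}g\rangle_{L^2(G)}$, which expresses the right-convolution kernel of $A^{*}$ in terms of that of $A$ evaluated off the diagonal. Expanding this kernel by Taylor's formula along the left-invariant vector fields $X_1,\dots,X_n$ adapted to the grading produces the terms $\Delta^{\alpha}X_x^{\alpha}(\widehat A(x,\pi)^{*})$: taking $\alpha$ spatial derivatives of the kernel corresponds on the Fourier side to applying $X_x^{\alpha}$, while the monomial factors $x^{\alpha}$ in the Taylor remainder turn into the difference operators $\Delta^{\alpha}$. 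One then checks that the remainder after $N$ terms lies in $S^{m-(\rho-\delta)(N+1)-\rho\ell+\delta[\beta]}_{\rho,\delta}$; here each matched pair (one difference operator, one vector field) gains a factor $\rho-\delta$ in the order — read off directly from the seminorms \eqref{seminorm} — while the Leibniz rule \eqref{differenceespcia;l} controls the products and the remaining loss $\delta[\beta]$ is the cost of the outer $X_x^{\beta}$ derivatives. Finally the formal series is realised as an honest symbol modulo $S^{-\infty}$ by the Borel-type asymptotic-summation lemma for these classes (this is where the equivalence of the seminorm conditions (A)--(E), via Theorem 13.16 of \cite{CR20}, is convenient).

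The composition rule (ii) follows the same template. Writing $A=A_1\circ A_2$ through $Af(x)=A_1(A_2f)(x)$ and inserting the quantisation formula for both factors, one expands the symbol $\widehat A_2(y,\pi)$ of the inner operator about $y=x$: the zeroth-order term gives $\widehat A_1(x,\pi)\widehat A_2(x,\pi)$ and the $\alpha$-th term gives $(\Delta^{\alpha}\widehat A_1(x,\pi))(X_x^{\alpha}\widehat A_2(x,\pi))$, with the orders $m_1,m_2$ adding and each order in the expansion again producing a gain $\rho-\delta$. The remainder estimate and the asymptotic summation are structurally identical to those in (i), now tracking simultaneously the index $[\alpha_\ell]=\ell$ of the difference operator and the $\delta[\beta]$ loss from the spatial derivatives $X_x^{\beta}$.

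For the $L^2$-boundedness (iii) — the Calder\'on--Vaillancourt theorem — a direct Plancherel argument fails because the symbol does not commute with $\pi(x)$, so I would instead fix a dyadic partition of unity $1=\sum_{j\ge 0}\psi_j(\mathcal{R})$ subordinate to the spectrum of $\mathcal{R}$ and write $A=\sum_j A_j$, where $A_j$ has symbol $\sigma(x,\pi)\psi_j(\pi(\mathcal{R}))$. Each $A_j$ is $L^2$-bounded with norm controlled by finitely many seminorms of $\sigma$ uniformly in $j$ (using $\sigma\in S^0_{\rho,\delta}$ and the mapping properties of $\pi(1+\mathcal{R})^{s}$ built into the definition of $L^\infty_{a,b}(\widehat G)$). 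The crucial input is then an almost-orthogonality bound $\|A_j^{*}A_k\|_{L^2\to L^2}+\|A_jA_k^{*}\|_{L^2\to L^2}\lesssim 2^{-\varepsilon|j-k|}$ for some $\varepsilon>0$, obtained from direct estimates on the cut-off symbols (and, equivalently, their right-convolution kernels) exploiting the spectral separation of $\psi_j$ and $\psi_k$; it is precisely the hypotheses $\delta\le\rho$ and $\delta\ne1$ that make $\varepsilon$ strictly positive. The Cotlar--Stein lemma then assembles $\|A\|_{L^2\to L^2}<\infty$. The main obstacle is exactly this quantitative almost-orthogonality step: one must track how the difference operators and the spatial derivatives interact with the spectral cut-offs $\psi_j(\mathcal{R})$ and convert the order gain into genuine geometric decay, which is more delicate here than in the Euclidean or compact settings because $\widehat G$ is a continuous dual and the symbols are operator-valued.
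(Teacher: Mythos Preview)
The paper does not prove this theorem. Theorem~\ref{calculus} is stated in the preliminaries as a summary of known facts, quoted from \cite{FischerRuzhanskyBook} (and, for the seminorm equivalence you invoke, from \cite{CR20}); no argument is given in the text. So there is no ``paper's own proof'' to compare against: the authors simply record the statement and move on.

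Your sketch is a reasonable outline of how those results are obtained in \cite{FischerRuzhanskyBook}. Parts (i) and (ii) are indeed proved there by the Taylor-expansion mechanism you describe: the right-convolution kernel of the adjoint (resp.\ of the composition) is expanded in the spatial variable, the monomials become difference operators $\Delta^\alpha$ on the Fourier side, and the order bookkeeping for the remainder follows from the seminorm definition \eqref{seminorm} together with the Leibniz rule \eqref{differenceespcia;l}. For (iii), the Calder\'on--Vaillancourt theorem in \cite{FischerRuzhanskyBook} is not obtained via the dyadic Cotlar--Stein scheme you propose; the argument there proceeds through kernel estimates and iterated commutators with multiplication operators, reducing to Sobolev-type bounds in the spaces $L^\infty_{a,b}(\widehat G)$. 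Your approach is a legitimate alternative and is closer in spirit to the Euclidean proof, but the ``main obstacle'' you flag---turning the order gain into geometric decay $2^{-\varepsilon|j-k|}$ in the operator-valued setting---is genuinely nontrivial and would need a careful treatment of how $X_x^\beta$ derivatives interact with the spectral cut-offs $\psi_j(\pi(\mathcal{R}))$; this is more than a routine adaptation. If you intend to write out (iii) in full, the kernel/commutator route of \cite{FischerRuzhanskyBook} is likely shorter.
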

As for the $L^p$-boundedness theorem on graded Lie groups, we have the following statement (see \cite{CDR21JGA}).
\begin{theorem}\label{Lp1CardonaDelgadoRuzhansky2}
Let $G$ be a graded Lie group of homogeneous dimension $Q.$ Let $A:C^\infty(G)\rightarrow\mathscr{D}'(G)$ be a pseudo-differential operator with symbol $\sigma\in S^{-m}_{\rho,\delta}(G\times \widehat{G} ),$ $0\leq \delta\leq \rho\leq 1,$ $\delta\neq 1.$ If  $1<p<\infty,$ then $A=\textnormal{Op}(\sigma)$ extends to a bounded operator from $L^p(G)$ to  $L^p(G)$ provided that \begin{equation*}
   m\geq m_{p}:=Q(1-\rho)\left|\frac{1}{p}-\frac{1}{2}\right|.
\end{equation*}
\end{theorem}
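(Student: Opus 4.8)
The plan is to transplant Fefferman's proof of the $L^p$-continuity of H\"ormander classes \cite{F73} to the graded group $G$, exploiting that $(G,|\cdot|,dg)$ — with $|\cdot|$ a homogeneous quasi-norm and $dg$ the Haar measure — is a space of homogeneous type of homogeneous dimension $Q$. First I would make two reductions. Since $S^{-m}_{\rho,\delta}(G\times\widehat G)\subseteq S^{-m'}_{\rho,\delta}(G\times\widehat G)$ for $m\ge m'$, it suffices to take $m=m_p$. By Theorem \ref{calculus}(i) the adjoint $A^{*}$ has symbol again in $S^{-m}_{\rho,\delta}$, and $\|A\|_{L^p\to L^p}=\|A^{*}\|_{L^{p'}\to L^{p'}}$ with $m_{p}=m_{p'}$; hence it is enough to treat $1<p\le 2$. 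For $p=2$ there is nothing to prove, since $m_{2}=0$ and Theorem \ref{calculus}(iii) is precisely the Calder\'on--Vaillancourt theorem. It remains to obtain a Hardy-space endpoint and interpolate.

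The endpoint claim is: every operator with symbol in $S^{-Q(1-\rho)/2}_{\rho,\delta}(G\times\widehat G)$ maps $H^{1}(G)$ boundedly into $L^{1}(G)$, where $H^{1}(G)$ is the atomic Hardy space of the space of homogeneous type $(G,|\cdot|,dg)$. (When $\rho=1$ this says $\textnormal{Op}(S^{0}_{1,\delta})$ consists of Calder\'on--Zygmund operators, which follows from Theorem \ref{calculus}(iii) together with routine kernel bounds; the content is the range $\rho<1$.) To prove it I would introduce a Littlewood--Paley decomposition adapted to $\mathcal R$: pick $\psi\in C_{c}^{\infty}(0,\infty)$ with $\sum_{j\ge 0}\psi_{j}\equiv 1$ on $[1,\infty)$, $\psi_{j}(\lambda)=\psi(2^{-j\nu}\lambda)$ for $j\ge 1$, so that $\sigma(x,\pi)=\sum_{j\ge 0}\sigma_{j}(x,\pi)$ with $\sigma_{j}(x,\pi):=\sigma(x,\pi)\,\psi_{j}(\pi(\mathcal R))$ and $A=\sum_{j}B_{j}$, $B_{j}:=\textnormal{Op}(\sigma_{j})$. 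The heart of the matter is a family of uniform estimates for the right-convolution kernels $k_{j,x}=\mathscr F_{G}^{-1}(\sigma_{j}(x,\cdot))$, obtained from Plancherel on $\widehat G$ (turning multiplication by $q_{\alpha}$ into $\Delta^{\alpha}$ and left-invariant derivatives $X_{z}^{\gamma}$ into right multiplication by $\pi(X)^{\gamma}$ on symbols), the symbol inequalities \eqref{seminorm}, the spectral localisation $\pi(\mathcal R)\sim 2^{j\nu}$ on $\operatorname{supp}\psi_{j}$, and the fact that such a shell carries Plancherel mass $\sim 2^{jQ}$. These estimates should say that, uniformly in $x$ and $j$, $k_{j,x}$ is an $L^{1}$-normalised kernel concentrated at the dilation scale $2^{-j\rho}$ and enjoying the H\"ormander-type regularity appropriate to that scale; the order $-Q(1-\rho)/2$ is exactly what absorbs the discrepancy between the frequency scale $2^{j}$ and the coarser spatial scale $2^{-j\rho}$. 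Granted these bounds, the $H^{1}\to L^{1}$ estimate follows the classical scheme: for an $H^{1}$-atom $a$ supported in a ball $B=B(x_{0},r)$ with $\int_{G}a\,dg=0$ and $\|a\|_{L^{\infty}}\lesssim |B|^{-1}$, one splits $\int_{G}|Aa|=\int_{2B}|Aa|+\int_{(2B)^{c}}|Aa|$; the local term is $\le |2B|^{1/2}\|Aa\|_{L^{2}}\lesssim |B|^{1/2}\|a\|_{L^{2}}\lesssim 1$ by $L^{2}$-boundedness of symbols of order $\le 0$, while the tail is controlled by subtracting $K_{A}(z,x_{0})$ (licit by the cancellation of $a$), writing $K_{A}(z,y)-K_{A}(z,x_{0})=\sum_{j}\big(k_{j,z}(y^{-1}z)-k_{j,z}(x_{0}^{-1}z)\big)$, estimating each difference with the kernel regularity, and summing over $j$ and over the dyadic annuli $|x_{0}^{-1}z|\sim 2^{k}r$, separating $j$ with $2^{-j\rho}\lesssim r$ from $j$ with $2^{-j\rho}\gtrsim r$ as in \cite{F73}.

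To conclude I would interpolate. Since the Fourier multiplier $(1+\mathcal R)^{w/\nu}$ has $x$-independent symbol, the composition formula of Theorem \ref{calculus}(ii) gives $\sigma_{A(1+\mathcal R)^{w/\nu}}=\sigma_{A}(x,\pi)\,\pi(1+\mathcal R)^{w/\nu}$ exactly, of order $-m_{p}+\nu\,\mathrm{Re}\,w$. Define the analytic family $B_{z}:=A\,(1+\mathcal R)^{(m_{p}-zQ(1-\rho)/2)/\nu}$ on the strip $0\le \mathrm{Re}\,z\le 1$. On $\mathrm{Re}\,z=0$ the symbol of $B_{z}$ lies in $S^{0}_{\rho,\delta}$, so $B_{z}:L^{2}\to L^{2}$ with norm growing polynomially in $|\mathrm{Im}\,z|$ (Theorem \ref{calculus}(iii) and the polynomial seminorm growth of imaginary powers of $1+\mathcal R$, see \cite{FischerRuzhanskyBook}); on $\mathrm{Re}\,z=1$ the symbol lies in $S^{-Q(1-\rho)/2}_{\rho,\delta}$, so $B_{z}:H^{1}\to L^{1}$ with polynomially bounded norm by the endpoint claim. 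Stein's interpolation theorem (in the Fefferman--Stein form, with $H^{1}$ as an endpoint) then yields $B_{\theta}:L^{p_{\theta}}\to L^{p_{\theta}}$ with $1/p_{\theta}=(1-\theta)/2+\theta$; taking $\theta=2/p-1\in[0,1)$ gives $p_{\theta}=p$ and $m_{p}-\theta Q(1-\rho)/2=0$, so $B_{\theta}=A$ and $A:L^{p}\to L^{p}$ is bounded. The range $2\le p<\infty$ follows from the duality reduction. The main obstacle is the set of kernel estimates for the dyadic blocks $B_{j}$ — making rigorous the heuristic that for $\sigma\in S^{-Q(1-\rho)/2}_{\rho,\delta}$ the $j$-th Littlewood--Paley piece is an $L^{1}$-normalised kernel living at the dilation scale $2^{-j\rho}$, with differentiation gaining a factor $2^{-j}$ per homogeneous order. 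This requires transferring the difference operators $\Delta^{\alpha}$ into multiplication by homogeneous polynomials through the group Fourier transform, controlling Plancherel masses of spectral shells of $\mathcal R$, and handling the $x$-dependence (where the hypotheses $\delta\le\rho$ and $\delta\neq 1$ enter); once these are in place, the Fefferman interpolation scheme and the Calder\'on--Zygmund machinery on $(G,|\cdot|,dg)$ run just as over $\mathbb R^{n}$.
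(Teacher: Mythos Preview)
The paper does not prove this theorem at all: it is stated in the preliminaries as a known result and attributed to \cite{CDR21JGA}, with the sentence ``As for the $L^p$-boundedness theorem on graded Lie groups, we have the following statement (see \cite{CDR21JGA}).'' There is therefore no proof in the present paper to compare your proposal against.

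That said, your outline is precisely the strategy carried out in the cited reference \cite{CDR21JGA}: reduce by duality to $1<p\le 2$, use the Calder\'on--Vaillancourt theorem at $p=2$, establish the $H^{1}(G)\to L^{1}(G)$ endpoint for operators of order $-Q(1-\rho)/2$ via a Littlewood--Paley decomposition adapted to $\mathcal R$ and Fefferman-type kernel estimates on the dyadic pieces, and then run Stein complex interpolation against the analytic family $A(1+\mathcal R)^{(m_p-zQ(1-\rho)/2)/\nu}$. Your identification of the ``main obstacle'' --- the precise $L^1$-normalised kernel bounds for the blocks $B_j$ at scale $2^{-j\rho}$, obtained by converting $\Delta^{\alpha}$ into polynomial weights via Plancherel and controlling the Plancherel mass of spectral shells of $\mathcal R$ --- is exactly where the substantial work in \cite{CDR21JGA} lies, and it is not a triviality: the noncommutativity of the group and the operator-valued nature of the symbol make the translation of Fefferman's $\mathbb R^n$ argument genuinely delicate. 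Your sketch is on the right track but is a plan rather than a proof; to close it you would need to reproduce the kernel estimates of \cite{CDR21JGA} (or consult them directly), which occupy the bulk of that paper.
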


\section{$L^p$-$L^q$-boundedness  on graded Lie groups}

\subsection{$L^p$-$L^q$-boundedness of Riesz and Bessel potentials} Here we discuss the $L^p$-$L^q$-boundedness of Riesz and Bessel potentials. We start with the case of Riesz potentials which already have been established in \cite{FischerRuzhanskyBook}. 

\begin{lemma}\label{Riesz} Let $\mathcal{R}$ be a positive Rockland operator of homogeneous degree $\nu>0$ on a graded Lie group $G.$ Then, the Riesz operator $I_{a}=\mathcal{R}^{-\frac{a}{\nu}},$ with $0<a<Q,$ admits a bounded extension from $L^p(G)$ into $L^q(G),$ that is, the estimate
\begin{equation}
    \Vert I_{a} f\Vert_{L^q}\leq  C\Vert  f\Vert_{L^p}\,\,\,
\end{equation}holds, if and only if, $p<q$ and
\begin{equation}\label{Necessary:condition}
    a=Q\left(\frac{1}{p}- \frac{1}{q}\right).
\end{equation}    
\end{lemma}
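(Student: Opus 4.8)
The plan is to establish the two directions separately. For sufficiency, the statement that \eqref{Necessary:condition} (together with $p<q$, which is forced by $0<a<Q$ and the requirement $\frac1p-\frac1q=\frac aQ>0$) implies the $L^p$-$L^q$ boundedness of $I_a=\mathcal{R}^{-a/\nu}$ is exactly the Hardy--Littlewood--Sobolev-type theorem for Rockland operators proved in \cite{FischerRuzhanskyBook}; I would simply cite it, since the excerpt explicitly allows assuming earlier results and notes this was done there. The real content is the \emph{necessity} of \eqref{Necessary:condition}, i.e. that boundedness of $I_a$ from $L^p$ to $L^q$ forces both $p<q$ and the exact relation $a=Q(\frac1p-\frac1q)$.

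For necessity I would exploit the homogeneity of $I_a$. Since $\mathcal{R}$ is homogeneous of degree $\nu$, the operator $\mathcal{R}^{-a/\nu}$ is homogeneous of degree $-a$: precisely, $I_a(f\circ D_r) = r^{-a}(I_a f)\circ D_r$ for all $r>0$. Combining this with the scaling of Lebesgue norms under the dilations $D_r$ — namely $\Vert g\circ D_r\Vert_{L^p(G)} = r^{-Q/p}\Vert g\Vert_{L^p(G)}$, using that the Haar measure scales by $r^{Q}$ — one computes, for a fixed nonzero $f\in C_0^\infty(G)$,
\begin{align*}
\Vert I_a(f\circ D_r)\Vert_{L^q} &= r^{-a}\Vert (I_a f)\circ D_r\Vert_{L^q} = r^{-a-Q/q}\Vert I_a f\Vert_{L^q},\\
\Vert f\circ D_r\Vert_{L^p} &= r^{-Q/p}\Vert f\Vert_{L^p}.
\end{align*}
If the bound $\Vert I_a g\Vert_{L^q}\le C\Vert g\Vert_{L^p}$ holds for all $g$, then applying it with $g=f\circ D_r$ gives $r^{-a-Q/q}\Vert I_a f\Vert_{L^q}\le C r^{-Q/p}\Vert f\Vert_{L^p}$, i.e. $\Vert I_a f\Vert_{L^q}\le C r^{a+Q/q-Q/p}\Vert f\Vert_{L^p}$ for every $r>0$. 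Choosing a single $f$ for which $I_a f\neq 0$ (which exists since $I_a$ is injective on a dense class, being a nonzero Fourier multiplier), letting $r\to 0$ and $r\to\infty$ forces the exponent to vanish: $a+Q/q-Q/p=0$, which is exactly \eqref{Necessary:condition}.

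It remains to rule out $p\ge q$. The cheap observation is that once $a=Q(\frac1p-\frac1q)$ is forced and $a>0$, we automatically get $\frac1p>\frac1q$, hence $p<q$; so the homogeneity argument already delivers $p<q$ as a consequence, and there is nothing separate to prove. The one point requiring a word of care — and the step I expect to be the only mild obstacle — is producing a legitimate test function $f\in C_0^\infty(G)$ with $I_a f\not\equiv 0$ and with $I_a f\in L^q$ so that the norms above are finite and the scaling manipulation is rigorous; this follows because for $0<a<Q$ the kernel of $I_a$ is a genuine locally integrable function homogeneous of degree $a-Q$ (see \cite{FischerRuzhanskyBook}), so $I_a$ maps $C_0^\infty(G)$ into a space of continuous functions with the stated decay, and injectivity on $C_0^\infty(G)$ follows from $\pi(\mathcal{R})$ being injective on smooth vectors for every nontrivial $\pi\in\widehat G$. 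With these remarks in place the equivalence is complete.
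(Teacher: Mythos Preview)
Your proposal is correct and follows essentially the same approach as the paper. Both directions ultimately rest on \cite{FischerRuzhanskyBook}: for sufficiency you and the paper alike cite the Hardy--Littlewood--Sobolev theorem proved there, and for necessity the paper invokes Proposition~3.2.8 of \cite{FischerRuzhanskyBook} (which states that a nonzero homogeneous operator bounded from $L^p$ to $L^q$ must have homogeneity degree $-Q(\tfrac1p-\tfrac1q)$), whereas you have written out precisely the dilation argument that underlies that proposition. One minor remark: your worry about whether $\Vert I_a f\Vert_{L^q}<\infty$ is unnecessary, since this is automatic from the assumed $L^p$--$L^q$ boundedness applied to $f\in C_0^\infty(G)\subset L^p(G)$; only the non-vanishing of $I_a f$ for some $f$ needs to be checked, and your injectivity observation handles that.
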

\begin{proof} 
    First, assume that $I_{a}:L^p(G)\rightarrow L^q(G)$ admits a bounded extension. Since $I_{a}$ is homogeneous of order $-a,$ Proposition 3.2.8 in \cite[Page 138]{FischerRuzhanskyBook} implies that the pair $(p,q)$ satisfies the conditions $p,q\in (1,\infty)$ together with \eqref{Necessary:condition}. That \eqref{Necessary:condition} and the condition $1<p<q<\infty$ are sufficient conditions for the existence of a  bounded extension  of $I_{a}:L^p(G)\rightarrow L^q(G)$ was proved in  \cite[Page 229]{FischerRuzhanskyBook}.
\end{proof}

\begin{lemma}\label {Lemma:Bessel:potential:lplq}Let $1<p,q<\infty.$ Let $\mathcal{R}$ be a positive Rockland operator of homogeneous degree $\nu>0$ on a graded Lie group $G.$ Then, the Bessel operator $B_{a}=(1+\mathcal{R})^{-\frac{a}{\nu}},$ admits a bounded extension from $L^p(G)$ into $L^q(G),$ that is, the estimate
\begin{equation}\label{eqref:lplq}
    \Vert B_{a} f\Vert_{L^q}\leq  C\Vert  f\Vert_{L^p}\,\,\,
\end{equation} holds, if and only if, $1<p<q<\infty$ and
\begin{equation}\label{Necessary:condition:2}
   a\geq Q\left(\frac{1}{p}- \frac{1}{q}\right).
\end{equation}    
\end{lemma}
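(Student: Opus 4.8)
The plan is to prove the two implications separately. The sufficiency — that $1<p<q<\infty$ together with $a\geq Q(1/p-1/q)$ implies the boundedness of $B_a$ — is essentially known: the case $a=Q(1/p-1/q)$ follows by comparing $B_a$ with the Riesz potential $I_a$ of Lemma \ref{Riesz}, using that $(1+\mathcal{R})^{-a/\nu}\mathcal{R}^{a/\nu}$ is $L^q$-bounded (it is a spectral multiplier of $\mathcal{R}$, or one can invoke the results of \cite{FischerRuzhanskyBook} directly, where boundedness of $B_a$ under \eqref{Necessary:condition:2} is stated). For $a>Q(1/p-1/q)$ one factors $B_a=B_{Q(1/p-1/q)}\circ B_{a-Q(1/p-1/q)}$ and uses $L^q$-boundedness of the second (negative-order, hence bounded on every $L^q$) factor. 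So the bulk of the work is the reverse implication.

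For necessity I would argue in two steps. \textbf{Step 1: $p\leq q$ is forced.} Suppose $B_a\colon L^p(G)\to L^q(G)$ is bounded. Apply $B_a$ to translates: since $\mathcal{R}$ is left-invariant, $B_a$ commutes with left translations, so $\|B_a(\tau_z f)\|_{L^q}=\|\tau_z(B_a f)\|_{L^q}=\|B_a f\|_{L^q}$ while $\|\tau_z f\|_{L^p}=\|f\|_{L^p}$; this alone gives nothing, so instead I would use a superposition/dilation argument. Fix $f\in C_0^\infty(G)$ with $B_af\neq 0$ and consider $N$ translates $\tau_{z_j}f$ by points $z_j$ spread far apart, so their supports (and those of $B_af$, which is not compactly supported but decays) are "almost disjoint"; then $\|\sum_j \tau_{z_j} f\|_{L^p}\sim N^{1/p}\|f\|_{L^p}$ and $\|B_a\sum_j\tau_{z_j}f\|_{L^q}\gtrsim N^{1/q}\|B_af\|_{L^q}$ (using that $B_af$ has an $L^q$-integrable tail so the cross terms are controlled for $z_j$ far apart), which forces $N^{1/q}\lesssim N^{1/p}$ for all $N$, i.e. $p\leq q$. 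To then exclude $p=q$: if $p=q$ were allowed with $a$ possibly $0$ or small, test $B_a$ against a sequence concentrating at a point and a sequence spreading out to see the two scales are incompatible — more cleanly, $B_a$ with $p=q$ would make $B_a\colon L^p\to L^p$ and $I_a=\mathcal{R}^{-a/\nu}$ a local-at-infinity comparison shows $I_a$ would be $L^p\to L^p$ bounded near the origin scale, contradicting Lemma \ref{Riesz} (which forbids $p=q$) once $a>0$; the case $a\leq 0$ with $p<q$ violates Step 2, and $a\leq 0$ with $p=q$ is the trivial multiplier case which is fine but gives nothing toward \eqref{Necessary:condition:2} — here I must be careful and will instead derive \eqref{Necessary:condition:2} directly, which when $p<q$ forces $a>0$ automatically.

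\textbf{Step 2: the order bound $a\geq Q(1/p-1/q)$.} This is the heart of the matter and the step I expect to be the main obstacle. The strategy is a \emph{dilation/scaling} argument exploiting the homogeneous structure. Although $B_a=(1+\mathcal{R})^{-a/\nu}$ is not homogeneous, $\mathcal{R}$ is homogeneous of degree $\nu$ under the dilations $D_r$, so $(1+\mathcal{R})^{-a/\nu}$ behaves, at \emph{small scales} $r\to 0$ (high frequencies), like $\mathcal{R}^{-a/\nu}=I_a$ up to lower-order corrections. Concretely, take $f_r:=f\circ D_{1/r}$ for a fixed bump $f$; then $\|f_r\|_{L^p}=r^{Q/p}\|f\|_{L^p}$ and I would show $\|B_a f_r\|_{L^q}\geq c\, r^{Q/q}\,r^{a}\|f\|_{\text{(suitable)}}$ for all small $r$, by writing $B_a f_r=r^{a}(I_a f_r)+\text{error}$ where the error comes from expanding $(1+\mathcal{R})^{-a/\nu}-\mathcal{R}^{-a/\nu}$ as $\mathcal{R}^{-a/\nu}[(\mathcal{R}^{-1}+1)^{-a/\nu}-1]$... — more robustly, using the functional calculus and the known kernel estimates for Bessel potentials in \cite{FischerRuzhanskyBook} (the kernel $\mathcal{B}_a$ of $B_a$ satisfies $\mathcal{B}_a(x)\sim |x|^{a-Q}$ near $x=0$ when $0<a<Q$, with exponential decay at infinity), one computes directly: $B_af_r(x)=\int \mathcal{B}_a(y^{-1}x)f_r(y)\,dy$ and for $r$ small and $x$ in a fixed compact set the integral is dominated by the local singularity $|y^{-1}x|^{a-Q}$, giving $\|B_af_r\|_{L^q}\gtrsim r^{Q}\cdot r^{a-Q}\cdot r^{-Q/q'}\|f\|_{L^1}$-type scaling; matching exponents against $r^{Q/p}$ and letting $r\to 0$ yields $Q/q + a - Q\cdot(\text{correction}) \geq Q/p$... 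I would pin down the exact powers by tracking the change of variables $y=D_r y'$, obtaining the clean inequality $r^{Q/q+a}\lesssim r^{Q/p}$ as $r\to 0$, hence $Q/q+a\geq Q/p$, i.e. \eqref{Necessary:condition:2}. The delicate points are (i) handling the non-homogeneous "$1+$" so that the estimate survives uniformly as $r\to 0$ — this is fine because we send $r\to 0$ and the top-order term dominates — and (ii) making the lower bound on $\|B_af_r\|_{L^q}$ rigorous, for which testing against a fixed $g\in C_0^\infty$ via duality, $\langle B_a f_r,g\rangle = \langle f_r, B_a^* g\rangle$, and using that $B_a^*g$ is bounded with controlled support/decay, is the cleanest route.

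Finally I would assemble: sufficiency from the first paragraph, $p<q$ and \eqref{Necessary:condition:2} from Steps 1–2, noting that \eqref{Necessary:condition:2} with $p<q$ automatically gives $a>0$, which closes the logical loop. The main obstacle, as indicated, is Step 2 — extracting the sharp exponent from the Bessel (rather than Riesz) potential, where one must ensure the inhomogeneous term does not spoil the scaling in the $r\to 0$ limit; the Riesz case Lemma \ref{Riesz} is available as a comparison and as a model for the bookkeeping.
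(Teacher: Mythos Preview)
Your sufficiency argument and your Step~1 coincide with the paper's: the paper invokes the Sobolev embedding of \cite[Theorem~4.4.28]{FischerRuzhanskyBook} for the endpoint $a=Q(1/p-1/q)$ and factors $B_a=B_{Q(1/p-1/q)}B_\varepsilon$ for larger $a$, and for $p<q$ it simply cites Proposition~3.2.4 of \cite{FischerRuzhanskyBook}, whose proof is essentially your translation/superposition argument. The core idea of Step~2 is also shared: both you and the paper exploit that conjugating $B_a$ by $D_r$ produces $r^{a}(r^{\nu}+\mathcal{R})^{-a/\nu}$, so that after dilation $B_a$ is compared to the homogeneous Riesz potential.

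The gap is in your implementation of the lower bound. Pairing $B_af_r$ with a \emph{fixed} $g\in C_0^\infty$ gives
\[
\langle B_af_r,g\rangle=\langle f_r,B_ag\rangle=r^{Q}\!\int f(y)\,\overline{(B_ag)(D_ry)}\,dy\ \sim\ r^{Q}\,(B_ag)(e_G)\!\int f,
\]
so only $\|B_af_r\|_{L^q}\gtrsim r^{Q}$; compared with $\|f_r\|_{L^p}\sim r^{Q/p}$ this yields the vacuous $p\geq 1$, not \eqref{Necessary:condition:2}. Your kernel-asymptotic alternative presupposes $0<a<Q$, which is not known a priori. What is actually needed is to scale the dual test function together with $f_r$; the paper achieves this by taking $g=f$ itself. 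Concretely, the paper restricts to $f\in E(\lambda,\infty)C_0^\infty(G)$ (spectral support in $[\lambda,\infty)$), uses the self-pairing lower bound $\|B_af\|_{L^q}\geq (B_af,f)_{L^2}/\|f\|_{L^{q'}}$, and on that spectral window replaces $(1+\omega)^{-a/\nu}$ by $C_\lambda\,\omega^{-a/\nu}$ to pass to $(\mathcal{R}^{-a/\nu}f,f)_{L^2}$---this is precisely the device that removes the inhomogeneous ``$1+$'' you flagged as the main obstacle. The dilation step then needs the interaction of $E(\lambda,\infty)$ with $D_r$, which the paper establishes via Littlewood--Paley theory (showing $E(2^N,\infty)(f\circ D_{2^{-M}})=(E(2^{N+M},\infty)f)\circ D_{2^{-M}}$); after that, homogeneity of $\mathcal{R}^{-a/\nu}$ and of the $L^p,L^{q'}$ norms forces $-Q/p+a+Q-Q/q'\geq 0$, i.e.\ \eqref{Necessary:condition:2}. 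Your plan is repairable along exactly these lines, but as written Step~2 does not go through.
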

\begin{proof} Before beginning the proof, let us first recall the following properties that will be used later in this proof:
\begin{itemize}
    \item \begin{equation} \label{p1}
        \mathscr{F}_G(f \circ D_r)(\pi)= r^{-Q} \mathscr{F}_G(f)(r^{-1} \cdot \pi)
    \end{equation} for any suitable functions $f,$ $r>0$ and $\pi \in \widehat{G}.$
    \item Let $\mathcal{R}$ be a positive Rockland operator of homogeneous degree $\nu>0$ then we have \begin{equation} \label{p2}
        ((r \cdot \pi)(\mathcal{R}))^{\frac{1}{\nu}}= r \pi(\mathcal{R})^{\frac{1}{\nu}}\,\,\,\text{for}\,\,r>0, \,\pi \in \widehat{G}.
    \end{equation}
\end{itemize}
The first property is easy to verify using the definition of the group Fourier transform.  For the proof of the second property, see \cite[Lemma 4.3]{FR21}. 

Now we begin the proof of Lemma \ref{Lemma:Bessel:potential:lplq}. First, let us prove that under the continuity property in \eqref{eqref:lplq} we have that $p<q$ and that \eqref{Necessary:condition:2} holds.   Assume that \eqref{eqref:lplq} holds with $1<p,q<\infty.$  That the inequality $p<q$ holds is a consequence of the general statement of Proposition 3.2.4 in \cite[Page 134]{FischerRuzhanskyBook}.
 Now, we are going to prove \eqref{Necessary:condition:2}. Let us consider the space of functions
\begin{equation}
  C_{0,\lambda}^\infty(G):=  E(\lambda,\infty)C^\infty_0(G),
\end{equation}where $\{dE_{\lambda}\}_{\lambda>0}$ denotes the spectral measure associated with the spectral projections $E(0,\lambda),$ $\lambda>0,$ of $\mathcal{R}$ and where  $E_{\lambda}:=E(\lambda,\infty)=I-E(0,\lambda).$  Note that $C_{0,\lambda}^\infty(G)$ is invariant under the action of the projection $E_{\lambda},$ in the sense that 
\begin{equation}\label{invariance}
 E(\lambda,\infty) C_{0,\lambda}^\infty(G)=C_{0,\lambda}^\infty(G).
\end{equation}Take $f\in C_{0,\lambda}^\infty(G) $ such that $f\neq 0.$ 
In view of \eqref{invariance} we have that $f=E(\lambda,\infty)f,$ and for some $\dot{f}\in C_{0}^\infty(G),$ $f=E(\lambda,\infty)\dot{f}.$ Note that we have the point-wise identity
\begin{equation}\label{Invariant:under:scale}
    \forall x\in G, f(x)=(E(\lambda,\infty)f)(x).
\end{equation}
By the duality of $(L^{q},L^{q'})$ we have that
\begin{align*}
    \Vert (1+\mathcal{R})^{-\frac{a}{\nu}} f \Vert_{L^q}=\sup_{\Vert g\Vert_{L^{q'}}=1 }|((1+\mathcal{R})^{-\frac{a}{\nu}} f,g)_{L^2} |.
\end{align*}Since $g=f/\Vert f\Vert_{L^{q'}}$ has norm less than or equal to one in $L^{q'}$ we have that
\begin{align*}
    \Vert (1+\mathcal{R})^{-\frac{a}{\nu}} f \Vert_{L^q}\geq |((1+\mathcal{R})^{-\frac{a}{\nu}} f,g)_{L^2} |=\frac{1}{\Vert f\Vert_{L^{q'}}}|((1+\mathcal{R})^{-\frac{a}{\nu}} f,f)_{L^2} |.
\end{align*}
Indeed, if $\Vert f\Vert_{L^{q'}}<\infty$ we have that $g=f/\Vert f\Vert_{L^{q'}}$ is a unit vector in $L^{q'}(G).$ In the case where $\Vert f\Vert_{L^{q'}}=\infty,$ then $g$ is the trivial function. In both cases, the positivity of $B_{a}=(1+\mathcal{R})^{-\frac{a}{\nu}}$ implies that
   $$  |((1+\mathcal{R})^{-\frac{a}{\nu}} f,f)_{L^2} |=((1+\mathcal{R})^{-\frac{a}{\nu}} f,f)_{L^2} \geq 0. $$
In what follows we assume that  $\lambda=2^{N}>0$ where $N$ is an integer. We are going to prove now that $\Vert f\Vert_{L^{q'}}<\infty.$ This can be deduced from the Littlewood-Paley theorem in  \cite{Nikolskii}. Indeed, if $\{\phi_j\}_{j\in \mathbb{Z}}$ is a dyadic partition of unity, in such a way that $\sum_{j}\phi_{j}(t)=1,$ with $\phi_{j}(t)=\phi_0(2^{-j}t),$ and $\phi_0\in \mathscr{S}(\mathbb{R})$ being positive, one has that for $1<q<\infty$ (and then $1<q'<\infty$),
\begin{align*}
    \Vert  f \Vert_{L^{q'}(G)} &= \Vert E(\lambda,\infty) f \Vert_{L^{q'}(G)}=\left\Vert\left(\sum_{j\in \mathbb{Z}}|\phi_{j}(\mathcal{R}^{1/\nu})E(\lambda,\infty)f|^2\right)^{\frac{1}{2}}\right\Vert_{ L^{q'}(G)}\\
    &=\left\Vert\left(\sum_{j\geq N}|\phi_{j}(\mathcal{R}^{1/\nu})E(\lambda,\infty)\dot{f}|^2\right)^{\frac{1}{2}}\right\Vert_{ L^{q'}(G)}\\
    &=\left\Vert\left(\sum_{j\geq N}|\phi_{j}(\mathcal{R}^{1/\nu})\dot{f}|^2\right)^{\frac{1}{2}}\right\Vert_{ L^{q'}(G)}\\
    &\leq \left\Vert\left(\sum_{j\in \mathbb{Z}}|\phi_{j}(\mathcal{R}^{1/\nu})\dot{f}|^2\right)^{\frac{1}{2}}\right\Vert_{ L^{q'}(G)}\\
    &\leq C\Vert \dot{f}\Vert_{L^{q'}}<\infty, \textnormal { since } \dot{f}\in C^{\infty}_0(G).
\end{align*}
By the commutativity of the functional calculus of $\mathcal{R}$ with the spectral projections $E(\lambda,\infty)$ we have that
\begin{align*}
  \Vert (1+\mathcal{R})^{-\frac{a}{\nu}} f \Vert_{L^q}&\geq \frac{1}{\Vert f\Vert_{L^{q'}}}  ((1+\mathcal{R})^{-\frac{a}{\nu}} f,f)_{L^2} \\&=\frac{1}{\Vert f\Vert_{L^{q'}}}\smallint_{G} (1+\mathcal{R})^{-\frac{a}{\nu}} f(x)\overline{f(x)}dx\\
   &=\frac{1}{\Vert f\Vert_{L^{q'}}}\smallint_{G}\smallint_{0}^{\infty} (1+\omega)^{-\frac{a}{\nu}}dE_\omega E(\lambda,\infty) f(x)\overline{f(x)}dx\\
   &=\frac{1}{\Vert f\Vert_{L^{q'}}}\smallint_{G}\smallint_{\lambda}^{\infty} (1+\omega)^{-\frac{a}{\nu}}dE_\omega  f(x)\overline{f(x)}dx\\
   &=\frac{1}{\Vert f\Vert_{L^{q'}}}\smallint_{\lambda}^{\infty} (1+\omega)^{-\frac{a}{\nu}} d \left(\smallint_{G}E_\omega  f(x)\overline{f(x)}dx\right)\\
   &=\frac{1}{\Vert f\Vert_{L^{q'}}}\smallint_{\lambda}^{\infty} (1+\omega)^{-\frac{a}{\nu}} d (E_\omega  f,\overline{f})_{L^2}.
\end{align*} In our further analysis note that there is a constant $C_{\lambda}>0$ such that 
\begin{equation}
    \forall \omega\geq \lambda ,\, (1+\omega)^{-\frac{a}{\nu}} \geq C_{\lambda} \omega^{-\frac{a}{\nu}}.
\end{equation}
Since $C_{\lambda}$ satisfies the inequality
\begin{align*}
    C_{\lambda}\leq (1+\omega)^{-\frac{a}{\nu}} \omega^{\frac{a}{\nu}}=\left(1-\frac{1}{1+\omega}\right)^{\frac{a}{\nu}}:=s(\omega),\,   \forall \omega\geq \lambda, 
\end{align*} and the function $s(\omega)$ is increasing when $\omega\geq \lambda,$ we can take
\begin{equation}\label{C:lambda}
    C_\lambda:=s(\lambda)=\left(1-\frac{1}{1+\lambda}\right)^{\frac{a}{\nu}}.
\end{equation}
In consequence, for all $f\in C_{0,\lambda}^\infty(G) $ such that $f\neq 0,$ we have the inequality
\begin{align*}
   \Vert (1+\mathcal{R})^{-\frac{a}{\nu}} f \Vert_{L^q} &\geq   \frac{1}{\Vert f\Vert_{L^{q'}}}\smallint_{\lambda}^{\infty} (1+\omega)^{-\frac{a}{\nu}} d (E_\omega  f,\overline{f})_{L^2}\\
             &\geq C_{\lambda}  \frac{1}{\Vert f\Vert_{L^{q'}}}\smallint_{\lambda}^{\infty} \omega^{-\frac{a}{\nu}} d (E_\omega  f,\overline{f})_{L^2}\\
             & =C_{\lambda}\frac{1}{\Vert f\Vert_{L^{q'}}}\smallint_{\lambda}^{\infty} \omega^{-\frac{a}{\nu}} d \left(\smallint_{G}E_\omega  f(x)\overline{f(x)}dx\right)\\
             &=C_{\lambda}\frac{1}{\Vert f\Vert_{L^{q'}}}\smallint_{G}\smallint_{\lambda}^{\infty} \omega^{-\frac{a}{\nu}}dE_\omega  f(x)\overline{f(x)}dx\\
             &= C_\lambda \frac{1}{\Vert f\Vert_{L^{q'}}}  (\mathcal{R}^{-\frac{a}{\nu}} f,f)_{L^2}.
\end{align*} Now, assume that $(1+\mathcal{R})^{-\frac{a}{\nu}}$ admits a bounded extension from $L^p(G)$ to $L^q(G).$ Then, there exists $C>0,$ such that, in particular, for all $f\in C_{0,\lambda}^\infty(G) $ such that $f\neq 0,$ we have the inequality

\begin{equation}
    C\Vert f\Vert_{L^p(G)}\geq \Vert (1+\mathcal{R})^{-\frac{a}{\nu}} f \Vert_{L^q}.
\end{equation}Combining the previous inequality and the analysis above we have that
\begin{equation}\label{auxiliar}
    C\Vert f\Vert_{L^p(G)}\geq C_\lambda \frac{1}{\Vert f\Vert_{L^{q'}}}  (\mathcal{R}^{-\frac{a}{\nu}} f,f)_{L^2}.
\end{equation}
Now we will use a scaling argument. Let $r=2^{-M},$ where $M$ is an integer, and let us analyse the action of the projection $E(\lambda,\infty)$ on the function $f\circ D_{r}.$ Note that
\begin{align*}
    [E(\lambda,\infty)(f\circ D_r)](x) &= [E(\lambda,\infty)\left(\sum_{j\in \mathbb{Z}}\phi_{j}(\mathcal{R}^{1/\nu})\right) (f\circ D_r)](x)\\
    &= \left(\sum_{j\geq N}\phi_{j}(\mathcal{R}^{1/\nu}) (f\circ D_r)\right)(x).
\end{align*} We observe that, for any $z\in G,$ using \eqref{p1} and \eqref{p2}, we have
\begin{align*}
    \phi_{j}(\mathcal{R}^{1/\nu})(f\circ D_r)(z) &=\smallint\limits_{\widehat{G}}\textnormal{Tr}[\pi(z)\phi_{j}(\pi(\mathcal{R})^{1/\nu})\mathscr{F}_{G}[f\circ D_r](\pi)]d\pi\\
    &= \smallint\limits_{\widehat{G}}\textnormal{Tr}[\pi(z)\phi_{j}(\pi(\mathcal{R})^{1/\nu})\widehat{f}(r^{-1}\cdot \pi)]r^{-Q}d\pi\\
     &= \smallint\limits_{\widehat{G}}\textnormal{Tr}[((r\cdot \pi')(z))\phi_{j}( ((r\cdot\pi')(\mathcal{R}))^{1/\nu})\widehat{f}( \pi')]d\pi'\\
     &=\smallint\limits_{\widehat{G}}\textnormal{Tr}[\pi'(D_{r}(z)))\phi_{j}( ((r\cdot\pi')(\mathcal{R}))^{1/\nu})\widehat{f}( \pi')]d\pi'\\
     &= \smallint\limits_{\widehat{G}}\textnormal{Tr}[\pi'(D_{r}(z)))\phi_{0}( 2^{-j}((r\cdot\pi')(\mathcal{R}))^{1/\nu})\widehat{f}( \pi')]d\pi'\\
     &= \smallint\limits_{\widehat{G}}\textnormal{Tr}[\pi'(D_{r}(z)))\phi_{0}( 2^{-j}((r^\nu\pi'(\mathcal{R}))^{1/\nu})\widehat{f}( \pi')]d\pi'\\
     &= \smallint\limits_{\widehat{G}}\textnormal{Tr}[\pi'(D_{r}(z)))\phi_{0}( 2^{-j}r\pi'(\mathcal{R})^{1/\nu})\widehat{f}( \pi')]d\pi'\\
     &= \smallint\limits_{\widehat{G}}\textnormal{Tr}[\pi'(D_{r}(z)))\phi_{0}( 2^{-j-M}\pi'(\mathcal{R})^{1/\nu})\widehat{f}( \pi')]d\pi'\\
     &=(\psi_{j+M}(\mathcal{R}^{\frac{1}{\nu}})f)\circ D_r(z).
\end{align*}
In consequence
\begin{align*}
     [E(\lambda,\infty)(f\circ D_r)](x) &= \left(\sum_{j\geq N}\phi_{j}(\mathcal{R}^{1/\nu}) (f\circ D_r)\right)(x)\\
     &= \left(\sum_{j\geq N}(\phi_{j+M}(\mathcal{R}^{\frac{1}{\nu}})f)\circ D_r\right)(x)\\
      &= \left(\sum_{j\geq N+M}(\phi_{j}(\mathcal{R}^{\frac{1}{\nu}})f)\circ D_r\right)(x)
      \\
     &=(f-E(0,2^{N+M})f)\circ D_r(x)\\
     &=(E(2^{N+M},\infty)f)\circ D_r.
\end{align*}
We have proved that 
\begin{align}\label{eq:dila}
  \forall f\in C^{\infty}_{0,\lambda}(G), \, \forall r=2^{-M}, \,  E(\lambda,\infty)(f\circ D_r)= (E(2^{N+M},\infty)f)\circ D_{r}. 
\end{align}
Let us consider the set of dilations $$\mathbb{D}_0:=\{D_r:r=2^{-M},\,M\in \mathbb{Z}\}.$$ 
In what follows let us consider $R\in \mathbb{D}_0,$ and let us consider the function $f\circ D_R\in C_{0,\lambda}^\infty(G).$ By applying \eqref{auxiliar} with $E(2^{N},\infty)(f\circ D_R),$  we have the inequality
\begin{align*}
     &C\Vert E(2^{N},\infty)(f\circ D_R)\Vert_{L^p(G)}\\
     &\geq C_\lambda \frac{1}{\Vert E(2^{N},\infty)(f\circ D_R)\Vert_{L^{q'}}}  (\mathcal{R}^{-\frac{a}{\nu}} E(2^{N},\infty)(f\circ D_R),E(2^{N},\infty)(f\circ D_R))_{L^2}.
\end{align*}Using that
\begin{itemize}
    \item $\Vert (E(2^{N+M},\infty)f)\circ D_R \Vert_{L^p(G)}=R^{-Q/p}\Vert E(2^{N+M},\infty)f\Vert_{L^p(G)},$
    \item $\Vert (E(2^{N+M},\infty)f)\Vert_{L^{q'}(G)}=R^{-Q/q'}\Vert E(2^{N+M},\infty)f\Vert_{L^{q'}(G)},$
\end{itemize}  that the change of variables  $y=D_{R}(x)$ gives the volume element $dx=R^{-Q}dy,$ and the identities below
\begin{align*}
    &(\mathcal{R}^{-\frac{a}{\nu}} (E(2^{N+M},\infty)f)\circ D_R ,(E(2^{N+M},\infty)f)\circ D_R )_{L^2}\\
    &=\smallint_G \mathcal{R}^{-\frac{a}{\nu}}(E(2^{N+M},\infty)f)\circ D_R (x)\overline{(E(2^{N+M},\infty)f)\circ D_R (x)}dx\\
    &=\smallint_G R^{-a}(\mathcal{R}^{-\frac{a}{\nu}}E(2^{N+M},\infty)f)\circ D_R(x))\overline{E(2^{N+M},\infty)f\circ D_R(x)}dx\\
    &= R^{-a-Q}\smallint_G (\mathcal{R}^{-\frac{a}{\nu}}E(2^{N+M},\infty)f)(y))\overline{E(2^{N+M},\infty)f(y)}dy\\
    &= R^{-a-Q}(\mathcal{R}^{-\frac{a}{\nu}}E(2^{N+M},\infty)f,E(2^{N+M},\infty)f)_{L^2},
\end{align*}we have that for any $R\in \mathbb{D}_0,$ and with $h=E(2^{N+M},\infty)f,$
we get the following lower-bound estimate
\begin{align*}
  C  R^{-Q/p}\Vert h\Vert_{L^p(G)}\geq C_{\lambda}R^{Q/q'}\frac{1}{\Vert h\Vert_{L^{q'}}}  R^{-a-Q}(\mathcal{R}^{-\frac{a}{\nu}}h,h)_{L^2}.
\end{align*}Let us observe that the previous inequality is equivalent to the following one
\begin{equation*}
   C R^{-Q/p+a+Q-Q/q'}\Vert h\Vert_{L^p(G)}\Vert h\Vert_{L^{q'}}C_{\lambda}^{-1} \geq  (\mathcal{R}^{-\frac{a}{\nu}}h,h)_{L^2}>0.
\end{equation*}Using \eqref{C:lambda} we have the inequality
\begin{equation}\label{eq:auxiliar}
   C R^{-Q/p+a+Q-Q/q'}\Vert h\Vert_{L^p(G)}\Vert h\Vert_{L^{q'}}\left(1-\frac{1}{1+2^{N}}\right)^{-\frac{a}{\nu}} \geq  (\mathcal{R}^{-\frac{a}{\nu}}h,h)_{L^2}>0, \,R=2^{-M}.
\end{equation}
   If $\varkappa:=-Q/p+a+Q-Q/q'<0,$ we take $M<0,$ $N=-M.$ Then $ h:=E(2^{N+M},\infty)f=E(1,\infty)f$ is independent of the pair $(N,M)=(-M,M)$ and consequently  \eqref{eq:auxiliar} takes the form
    \begin{equation}\label{case:1}
        C 2^{-M\varkappa}\Vert h\Vert_{L^p(G)}\Vert h\Vert_{L^{q'}}\left(1-\frac{1}{1+2^{-M}}\right)^{-\frac{a}{\nu}} \geq  (\mathcal{R}^{-\frac{a}{\nu}}h,h)_{L^2}>0. 
    \end{equation}
    Since $M<0,$ $2^{-M}>1,$ we have
\begin{align*}
    \left(1-\frac{1}{1+2^{-M}}\right)^{-\frac{a}{\nu}}=2^{\frac{Ma}{\nu}}(1+2^{-M})^{\frac{a}{\nu}}\leq 2^{\frac{Ma}{\nu}}(2\times 2^{-M})^{\frac{a}{\nu}} =2^{\frac{a}{\nu}}.
\end{align*}In consequence, letting $-M\rightarrow \infty,$ $2^{-M\varkappa}\rightarrow 0^{+}$ implying that the inequality in \eqref{case:1} is only possible if 
\begin{equation}
    \varkappa=-Q/p+a+Q-Q/q'=a-Q/p+Q/q\geq 0,
\end{equation} 
 as desired.
Now, in order to prove the converse, that is if \eqref{Necessary:condition:2} holds, then $B_a$ is bounded from $L^p(G)$ into $L^{q}(G),$ let us proceed as follows. If 
$$ a=Q(1/p-Q/q), $$
the boundedness of $B_a$  from $L^p(G)$ into $L^{q}(G)$ is a consequence of the Sobolev embedding theorem  proved in \cite[page 246]{FischerRuzhanskyBook}. Indeed, from \cite[Theorem 4.4.28]{FischerRuzhanskyBook} it follows that the continuous embedding $L^{p}_v(G)\subset L^{p}_u(G) $ holds for $v-u=Q(1/p-Q/q).$ Certainly, this means that
\begin{equation}
    \forall f\in \mathscr{S}(G),\,\,\Vert B_{-u} f\Vert_{L^q(G)}\leq  C \Vert B_{-v} f\Vert_{L^p(G)}.
\end{equation}With $a=-u$ and $v=0,$ that is when $a=Q(1/p-Q/q),$ we have the boundedness operator inequality
 \begin{equation}
    \forall f\in \mathscr{S}(G),\,\,\Vert B_{a} f\Vert_{L^q(G)}\leq C \Vert f\Vert_{L^p(G)}.
\end{equation}Now, if $a>Q(1/p-Q/q),$ we take $\varepsilon=a-Q(1/p-Q/q)>0,$ and we factorise
\begin{equation}
    B_a=(1+\mathcal{R})^{-\frac{a}{\nu}}=(1+\mathcal{R})^{-Q(1/p-1/q)/\nu-\varepsilon/\nu}= (1+\mathcal{R})^{-Q(1/p-Q/q)}B_{\varepsilon}.
\end{equation}Is clear that $(1+\mathcal{R})^{-Q(1/p-Q/q)/\nu}:L^{p}(G)\rightarrow L^{q}(G)$ is bounded. On the other hand $B_{\varepsilon}:L^{p}(G)\rightarrow L^{p}(G)$ is bounded on $L^p(G)$ for all $1<p<\infty.$ Indeed, $$B_{\varepsilon}\in \Psi^{-\varepsilon}_{1,0}(G\times \widehat{G})\subset \mathscr{B}(L^p(G)).$$ In consequence $ B_a$ is bounded from $L^p(G)$ into $L^q(G).$ The proof is complete.
\end{proof}
\begin{remark} Observe that the necessary and sufficient conditions of  Lemma
    \ref{Riesz} have been obtained in \cite{FischerRuzhanskyBook}. While in Lemma \ref{Lemma:Bessel:potential:lplq}  that $1<p<q<\infty$ is a necessary condition for the $L^p$-$L^q$-boundedness of $B_a,$ this fact follows from a more general statement, see Proposition 3.2.4 in \cite[Page 134]{FischerRuzhanskyBook}:

    {\it 
    if $T:L^p(G)\rightarrow L^q(G)$ is  left-invariant and bounded, with $1\leq p,q<\infty,$ and $p> q,$ then $T=0.$ }
    
On the other hand that  
\eqref{Necessary:condition:2} is a necessary condition for the $L^p$-$L^q$-boundedness of $B_a$ is the contribution to the statement in Lemma  \ref{Lemma:Bessel:potential:lplq} in this paper.
\end{remark}

\subsection{$L^p$-$L^q$-boundedness of pseudo-differential operators I}
The aim of this subsection is to extend the results in the previous subsection to the pseudo-differential setting. The following result presents the necessary and sufficient criteria for a pseudo-differential operator to be bounded from $L^p(G)$ into $L^q(G)$ for the range $1<p\leq 2 \leq q<\infty.$

\begin{theorem} \label{thh1}
    
Let $1<p\leq 2\leq q<\infty$ and $m \in \mathbb{R}.$ Let $G$ be a graded Lie group of  homogeneous dimension $Q.$ Then, every pseudo-differential operator $A\in \Psi^{m}_{\rho,\delta}(G\times \widehat{G})$ with $0\leq \delta \leq  \rho \leq 1$ and $\delta \neq 1,$ admits a bounded extension from $L^p(G)$ into $L^q(G),$ that is
\begin{equation}
   \forall f\in C^{\infty}_0(G),\, \Vert A f\Vert_{L^q}\leq  C\Vert  f\Vert_{L^p}\,\,\,
\end{equation} holds, if and only if, 
\begin{equation}\label{Necessary:condition:3a}
   m\leq -Q\left(\frac{1}{p}- \frac{1}{q}\right).
\end{equation}    
\end{theorem}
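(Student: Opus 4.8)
**The plan is to prove necessity and sufficiency separately, using Lemma \ref{Lemma:Bessel:potential:lplq} (the sharp Bessel mapping property), the $L^2$-boundedness in Theorem \ref{calculus}(iii), and a duality/interpolation-free functional-analytic argument based on the Plancherel theorem.**

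For the \emph{sufficiency} direction, I would first reduce to the case of equality $m = -Q(1/p-1/q)$ by the obvious monotonicity: if $\sigma \in S^m_{\rho,\delta}$ with $m \le -Q(1/p-1/q)$, write $A = \textnormal{Op}(\sigma) = B_{-(m + Q(1/p-1/q))/\,}\cdot(\text{something of order } -Q(1/p-1/q))$ — more cleanly, factor $A = (1+\mathcal{R})^{-t/\nu} \circ A'$ where $A' \in S^{m+t}_{\rho,\delta}$ and $t \ge 0$ is chosen so that $m + t = -Q(1/p-1/q)$; since $(1+\mathcal{R})^{-t/\nu} = B_t$ is bounded on $L^q$ and $A' \mapsto A$ preserves the class by Theorem \ref{calculus}(ii). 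So assume $m = -Q(1/p-1/q)$. The key idea is to split $A = B_{a} \circ \bigl[(1+\mathcal{R})^{a/\nu} A\bigr]$ with $a = Q(1/p-1/q) \ge 0$. By Lemma \ref{Lemma:Bessel:potential:lplq}, $B_a : L^2(G) \to L^q(G)$ is bounded, since $a = Q(1/2 - 1/q) \cdot \tfrac{?}{}$ — wait, this requires $a \ge Q(1/2-1/q)$, which holds because $a = Q(1/p-1/q) \ge Q(1/2-1/q)$ when $p \le 2$. It remains to show $(1+\mathcal{R})^{a/\nu} A : L^p(G) \to L^2(G)$ is bounded. By the Plancherel theorem this is equivalent to $L^p$-boundedness composed with a statement on the Fourier side; more directly, $(1+\mathcal{R})^{a/\nu} A$ has symbol in $S^{m + a}_{\rho,\delta} = S^{0-(m_{\text{shift}})}$... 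Here I would instead use the dual factorization: $A = \bigl[A (1+\mathcal{R})^{b/\nu}\bigr]\circ (1+\mathcal{R})^{-b/\nu}$ with $(1+\mathcal{R})^{-b/\nu} = B_b : L^p(G) \to L^2(G)$ bounded for $b = Q(1/p - 1/2) \ge 0$ (again by Lemma \ref{Lemma:Bessel:potential:lplq}, valid since $p \le 2$), and $A(1+\mathcal{R})^{b/\nu}$ has symbol of order $m + b = -Q(1/p-1/q) + Q(1/p-1/2) = -Q(1/2 - 1/q) = -m_q \le 0$. The only subtlety is that $L^2 \to L^2$ via Theorem \ref{calculus}(iii) needs order $\le 0$ when $\rho \ge \delta$, $\delta\ne 1$, precisely our hypothesis. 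Chaining: $L^p \xrightarrow{B_b} L^2 \xrightarrow{A(1+\mathcal{R})^{b/\nu}} L^2 \xrightarrow{?}$ — but the target is $L^q$, not $L^2$. So I must be more careful and use $A = B_a \circ \bigl[(1+\mathcal{R})^{a/\nu} A (1+\mathcal{R})^{b/\nu}\bigr] \circ B_b$ with $a + b + m = 0$, i.e. $a = Q(1/2-1/q)$, $b = Q(1/p-1/2)$, $a+b = Q(1/p-1/q) = -m$. Then $B_b : L^p \to L^2$, the middle operator has order $m + a + b = 0$ so maps $L^2 \to L^2$ by Theorem \ref{calculus}(iii), and $B_a : L^2 \to L^q$ by Lemma \ref{Lemma:Bessel:potential:lplq}. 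This composition is licit because the middle symbol lies in $S^0_{\rho,\delta}$ by Theorem \ref{calculus}(ii) and the spectral multiplier calculus for $(1+\mathcal{R})^{s/\nu} \in S^{s}_{1,0} \subset S^s_{\rho,\delta}$ (for $\rho < 1$ we need $S^s_{1,0}\subset S^s_{\rho,0}$, which holds). This is the clean argument I would write.

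For the \emph{necessity} direction, suppose every $A \in \Psi^m_{\rho,\delta}$ is bounded $L^p \to L^q$. I would apply this to the specific operator $A = B_{-m} = (1+\mathcal{R})^{-(-m)/\nu}$ — no wait, I need $-m \ge 0$ to stay in a negative-order class. The natural test operator is $A = (1+\mathcal{R})^{m/\nu}$, whose symbol $\pi(1+\mathcal{R})^{m/\nu}$ lies in $S^m_{\rho,\delta}(G\times\widehat{G})$ for \emph{any} $0\le\delta\le\rho\le 1$ (spectral multipliers of $\mathcal{R}$ are in $S^m_{1,0}$, and $S^m_{1,0}\subset S^m_{\rho,\delta}$ when $\rho \le 1$, $\delta \ge 0$ — this inclusion holds since increasing $\delta$ and decreasing $\rho$ enlarges the class). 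If $m > 0$ this has positive order; but $(1+\mathcal{R})^{m/\nu} = B_{-m}$ with $-m<0$, so actually for the necessity we should consider: the hypothesis forces $(1+\mathcal{R})^{m/\nu} : L^p \to L^q$ bounded. If $m \ge 0$, then $B_{-m}$ with "$a = -m \le 0$" is not covered by Lemma \ref{Lemma:Bessel:potential:lplq} directly (that lemma is about $a$ possibly any real, but boundedness requires $p<q$ and $a \ge Q(1/p-1/q) > 0$); in particular $B_{a}$ with $a \le 0$ is \emph{not} bounded $L^p\to L^q$ for $p<q$, and not bounded $L^p \to L^p$ either unless $a = 0$. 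So if $m > 0$ we get a contradiction, forcing $m \le 0$; and then for $m = -a \le 0$, $A = B_a$ with $a = -m$, boundedness $L^p\to L^q$ forces via Lemma \ref{Lemma:Bessel:potential:lplq} that $a \ge Q(1/p-1/q)$, i.e. $m \le -Q(1/p-1/q)$. This is exactly \eqref{Necessary:condition:3a}. The one point requiring care: I must verify that $\pi(1+\mathcal{R})^{m/\nu}$ genuinely satisfies the seminorm bounds \eqref{seminorm} defining $S^m_{\rho,\delta}$ — this follows from the kernel estimates for Bessel potentials of Rockland operators (Proposition 5.3.4 / Section 4.3 of \cite{FischerRuzhanskyBook}), and the fact that left-invariant operators have $X_x^\beta \sigma = 0$ for $\beta \ne 0$ and $\Delta^\alpha$ applied to spectral multipliers of $\mathcal{R}$ produces symbols of order lowered by $\rho[\alpha]$ (indeed by $[\alpha]$, so $S^m_{1,0}$ membership, hence $S^m_{\rho,\delta}$).

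\textbf{The main obstacle} I anticipate is making the three-fold composition $B_a \circ M \circ B_b$ fully rigorous within the symbol classes: one must confirm that $(1+\mathcal{R})^{s/\nu}$ has symbol in $S^{s}_{\rho,\delta}(G\times\widehat{G})$ for arbitrary real $s$ (this is standard, e.g. \cite[Corollary 5.5.13]{FischerRuzhanskyBook}), that the composition rule Theorem \ref{calculus}(ii) applies even when one factor is a pure multiplier (so the $X_x^\alpha$ terms in the asymptotic expansion vanish and no convergence issue arises), and that the domains match so that the operator identity $A = B_a M B_b$ holds on $C_0^\infty(G)$ — equivalently, that $M := (1+\mathcal{R})^{a/\nu} A (1+\mathcal{R})^{b/\nu}$ is well-defined with symbol $\pi(1+\mathcal{R})^{a/\nu}\sigma(x,\pi)\pi(1+\mathcal{R})^{b/\nu}$ of order $0$. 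A secondary subtlety is the endpoint bookkeeping: checking that the exponents $a = Q(1/2-1/q) \ge 0$ and $b = Q(1/p-1/2)\ge 0$ are both nonnegative uses precisely $p \le 2 \le q$, and that their sum equals $-m$ uses $m = -Q(1/p-1/q)$, after the initial reduction. None of this is deep, but it is where the writing effort goes.
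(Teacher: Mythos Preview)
Your proposal is correct and follows essentially the same route as the paper: for sufficiency you arrive at the three-fold factorisation $A = B_{a}\,M\,B_{b}$ with $a = Q(1/2-1/q)$, $b = Q(1/p-1/2)$ and $M = (1+\mathcal{R})^{a/\nu} A (1+\mathcal{R})^{b/\nu}\in\Psi^{0}_{\rho,\delta}$, then apply Lemma~\ref{Lemma:Bessel:potential:lplq} to the outer Bessel factors and Theorem~\ref{calculus}(iii) to $M$; for necessity you test against $A=(1+\mathcal{R})^{m/\nu}=B_{-m}\in\Psi^{m}_{1,0}\subset\Psi^{m}_{\rho,\delta}$ and invoke Lemma~\ref{Lemma:Bessel:potential:lplq}. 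Two cosmetic simplifications: the preliminary reduction to the equality case $m=-Q(1/p-1/q)$ is unnecessary, since with your choice of $a,b$ the middle operator $M$ has order $m+a+b\le 0$ already; and in the necessity direction the case split on the sign of $m$ is superfluous, because Lemma~\ref{Lemma:Bessel:potential:lplq} directly gives that boundedness of $B_{-m}$ forces $-m\ge Q(1/p-1/q)$ regardless of sign.
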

\begin{proof}
    Assume that $m> -Q\left(\frac{1}{p}- \frac{1}{q}\right).$ We are going to show that there exists $A\in \Psi^{m}_{\rho,\delta}(G\times \widehat{G})$ which is not bounded from $L^p(G)$ into $L^q(G).$ We consider $$A=B_{-m}=(1+\mathcal{R})^{\frac{m}{\nu}}\in \Psi^{m}_{1,0}(G\times \widehat{G})\subset \Psi^{m}_{\rho,\delta}(G\times \widehat{G}).$$ Since $$-m< Q\left(\frac{1}{p}- \frac{1}{q}\right),$$ from Lemma \ref{Lemma:Bessel:potential:lplq}, we have that  $A=B_{-m}$ is not bounded from $L^p(G)$ into $L^q(G).$ So, we have proved the necessity of the order condition  \eqref{Necessary:condition:3a}. Now, in order to prove the reverse statement, we consider $m$ satisfying \eqref{Necessary:condition:3a} and  $m_1$ and $m_2$ satisfying the conditions
    \begin{equation}
     m=m_1+m_2,\,   m_1\leq -Q(1/p-1/2),\,m_2\leq -Q(1/2-1/q).
    \end{equation} If  $A\in \Psi^{m}_{\rho,\delta}(G\times \widehat{G}),$ we factorise $A$ as follows,
    \begin{align*}
        A=B_{-m_2}A_0 B_{-m_1},\,\,A_0=B_{m_2}AB_{m_1}.
    \end{align*}Note that $A_0\in \Psi^{0}_{\rho,\delta}(G\times \widehat{G}).$ The Calder\'on-Vaillancourt theorem (Theorem \ref{calculus}(iii)) implies that $A_0$ is bounded from $L^2(G)$ into $L^2(G).$ On the other hand, from Lemma \ref{Lemma:Bessel:potential:lplq} we have that $B_{m_2}:L^2(G)\rightarrow L^q(G),$ and $B_{m_1}:L^p(G)\rightarrow L^2(G),$ are bounded operators. In consequence, we have proved that $A$ admits a bounded extension from $L^p(G)$ into $L^q(G).$ The proof is complete.    
\end{proof}

\subsection{$L^p$-$L^q$-boundedness of pseudo-differential operators II}
In this subsection, we consider the $L^p-L^q$ boundedness of pseudo-differential operators on graded Lie groups for a wider range of indices $p$ and $q.$ Our main result of this section is the following theorem. 
\begin{theorem} \label{thh2} Let $1<p \leq q <\infty, m \in \mathbb{R},$ and let  $G$ be a graded Lie group of  homogeneous dimension $Q.$ Then, every pseudo-differential operator $A\in \Psi^{m}_{\rho,\delta}(G\times \widehat{G})$ with $0\leq \delta \leq  \rho \leq 1$ and $\delta \neq 1,$ admits a bounded extension from $L^p(G)$ into $L^q(G),$ that is
\begin{equation}
   \forall f\in C^{\infty}_0(G),\, \Vert A f\Vert_{L^q}\leq  C\Vert  f\Vert_{L^p}\,\,\,
\end{equation} holds in the following cases: 
\begin{itemize}
        \item[(i)] if $1<p\leq q \leq 2$ and  \begin{equation}\label{Necessary:condition:4}
   m\leq -Q \left( \frac{1}{p}-\frac{1}{q}+(1-\rho) \left(\frac{1}{q}-\frac{1}{2}\right)\right). \end{equation}    
\item[(ii)] if $2 \leq p \leq q<\infty$ and 

\begin{equation}\label{Necessary:condition:3}
   m\leq -Q\left( \frac{1}{p}-\frac{1}{q}+(1-\rho) \left(\frac{1}{2}-\frac{1}{p}\right)\right);
   \end{equation} 
\end{itemize}
\end{theorem}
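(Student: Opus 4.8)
The plan is to reduce the boundedness of $A$, in both ranges, to the already-established $L^r$-boundedness of pseudo-differential operators (Theorem~\ref{Lp1CardonaDelgadoRuzhansky2}) combined with the $L^p$-$L^q$ mapping property of a single Bessel potential (Lemma~\ref{Lemma:Bessel:potential:lplq}), exactly as in the proof of Theorem~\ref{thh1}, but attaching the Bessel potential on the side dictated by whether $p,q$ lie below or above $2$. Set $a:=Q\left(\frac{1}{p}-\frac{1}{q}\right)\geq 0$ and write $B_a=(1+\mathcal{R})^{-a/\nu}$, so that $B_{-a}:=B_a^{-1}=(1+\mathcal{R})^{a/\nu}\in\Psi^{a}_{1,0}(G\times\widehat{G})\subset\Psi^{a}_{\rho,\delta}(G\times\widehat{G})$. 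If $p=q$ then $a=0$, $B_0=\mathrm{Id}$, and both assertions reduce at once to Theorem~\ref{Lp1CardonaDelgadoRuzhansky2}; so from now on assume $p<q$, in which case $a$ is exactly the threshold index of Lemma~\ref{Lemma:Bessel:potential:lplq}, and hence $B_a\colon L^p(G)\to L^q(G)$ is bounded.

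For case (i), where $1<p\leq q\leq 2$, I would factor $A=A_0 B_a$ with $A_0:=A\,B_{-a}=A(1+\mathcal{R})^{a/\nu}$. The symbolic calculus of \cite{FischerRuzhanskyBook} gives $A_0\in\Psi^{m+a}_{\rho,\delta}(G\times\widehat{G})$; here the composition is with the $x$-independent symbol $\pi(1+\mathcal{R})^{a/\nu}$, so it is exact and the order $m+a$ is read off directly from the Leibniz rule \eqref{differenceespcia;l} together with the standard estimates $\Delta^{\alpha}[\pi(1+\mathcal{R})^{a/\nu}]\in S^{a-[\alpha]}_{1,0}(G\times\widehat{G})$. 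Since $q\leq 2$ we have $|1/q-1/2|=1/q-1/2$, so hypothesis \eqref{Necessary:condition:4} says precisely that $m+a\leq -Q(1-\rho)|1/q-1/2|$; Theorem~\ref{Lp1CardonaDelgadoRuzhansky2}, applied with the exponent $q$, then yields $A_0\in\mathscr{B}(L^q(G))$. Composing with $B_a\colon L^p(G)\to L^q(G)$ gives $A=A_0 B_a\colon L^p(G)\to L^q(G)$, as desired.

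For case (ii), where $2\leq p\leq q<\infty$, I would instead factor $A=B_a A_0$ with $A_0:=B_{-a}\,A=(1+\mathcal{R})^{a/\nu}A$. Again $A_0\in\Psi^{m+a}_{\rho,\delta}(G\times\widehat{G})$ by the composition calculus: the outer factor $B_{-a}$ is now a genuine (though left-invariant) pseudo-differential factor of type $(1,0)$, the asymptotic expansion closes up because $\delta\neq 1$, and the leading term $\pi(1+\mathcal{R})^{a/\nu}\sigma_A(x,\pi)$ has order $m+a$ with strictly lower-order remainders. Since $p\geq 2$ we have $|1/p-1/2|=1/2-1/p$, so \eqref{Necessary:condition:3} says precisely that $m+a\leq -Q(1-\rho)|1/p-1/2|$; Theorem~\ref{Lp1CardonaDelgadoRuzhansky2}, applied with the exponent $p$, gives $A_0\in\mathscr{B}(L^p(G))$, and composing with $B_a\colon L^p(G)\to L^q(G)$ gives $A=B_a A_0\colon L^p(G)\to L^q(G)$.

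The step I expect to matter most is the bookkeeping that keeps the thresholds sharp: the Bessel potential must be attached on the side for which the leftover operator $A_0\in\Psi^{m+a}_{\rho,\delta}$ is tested for $L^r$-boundedness at the exponent $r$ lying on the correct side of $2$ (namely $r=q\leq 2$ in case (i) and $r=p\geq 2$ in case (ii)), since it is $Q(1-\rho)|1/r-1/2|$ that enters Theorem~\ref{Lp1CardonaDelgadoRuzhansky2}; attaching it on the other side would only produce the strictly weaker order condition with $1/p-1/2$ in place of $1/q-1/2$, respectively $1/q-1/2$ in place of $1/p-1/2$. The remaining technical input is the inclusion $A_0\in\Psi^{m+a}_{\rho,\delta}$ throughout the range $0\leq\delta\leq\rho\leq1$, $\delta\neq 1$: for the right composition $A\,B_{-a}$ this is elementary, from the difference-operator Leibniz rule and the estimates on $\Delta^{\alpha}[\pi(1+\mathcal{R})^{a/\nu}]$; for the left composition $B_{-a}\,A$ it follows from the composition rules of the calculus once one notes that the outer factor is of type $(1,0)$, so the usual hypothesis $\delta<\rho$ in Theorem~\ref{calculus}(ii) is here relaxed to the milder $\delta<1$, which is part of our standing assumptions.
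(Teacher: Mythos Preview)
Your proposal is correct and follows essentially the same approach as the paper: with $a=Q(1/p-1/q)$ (the paper writes $m':=-a$), both you and the authors factor $A=(AB_{-a})B_a$ in case~(i) and $A=B_a(B_{-a}A)$ in case~(ii), then apply Lemma~\ref{Lemma:Bessel:potential:lplq} for the Bessel factor and Theorem~\ref{Lp1CardonaDelgadoRuzhansky2} at the exponent $q$ (respectively $p$) for the remaining operator of order $m+a$. Your added remarks on why the Bessel potential must be attached on the correct side, and on why the composition with the type-$(1,0)$ factor $B_{-a}$ stays in $\Psi^{m+a}_{\rho,\delta}$ under the mere hypothesis $\delta\neq1$, are accurate elaborations not spelled out in the paper.
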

\begin{proof}  
\begin{itemize}
    \item[(i)] Let us consider $p, q$ and $m$ satisfying the conditions given in (i). Choose $m'= -Q \left( \frac{1}{p}-\frac{1}{q} \right)$ and this implies that the Bessel potential $B_{-m'}$ is bounded from $L^p(G)$ to $L^q(G)$ as a consequence of Lemma \ref{Lemma:Bessel:potential:lplq}. For $A \in \Psi^m_{\rho, \delta}(G \times \widehat{G}),$ we decompose it as follows:
    $$A= (A B_{m'}) B_{-m'}.$$
    Now, we note that operator $AB_{m'} \in \Psi^{m-m'}_{\rho, \delta}(G \times \widehat{G})$ with $m-m'$ satisfying $m-m' \leq -Q(1-\rho) \left( \frac{1}{q}-\frac{1}{2} \right).$ Then, Theorem \ref{Lp1CardonaDelgadoRuzhansky2} shows that $AB_{m'}$ is bounded operator from $L^q(G)$ into $L^q(G).$ Therefore, we conclude that the operator $A$ has a bounded extension from $L^p(G)$ into $L^q(G).$
    \item[(ii)] To prove this part we follow the same strategy as in Part (i). We factorise the operator $A \in \Psi^m_{\rho, \delta}(G \times \widehat{G})$  in the following manner:
    $$A= B_{-m'} (B_{m'} A),$$
    where $m'=-Q(\frac{1}{p}-\frac{1}{q}).$ Again, it follows from Lemma \ref{Lemma:Bessel:potential:lplq} that the operator $B_{-m'}$ is a bounded from $L^{p}(G)$ into $L^q(G).$ On the other hand, the operator $B_{m'} A \in \Psi^{m-m'}_{\rho, \delta}(G \times \widehat{G})$ with $m-m'\leq -Q(1-\rho) \left(\frac{1}{2}-\frac{1}{p} \right),$ which, as a consequence of Theorem \ref{Lp1CardonaDelgadoRuzhansky2}, yields that the operator $B_{-m'} A$ is bounded from $L^p(G)$ into $L^p(G).$ Hence, we conclude that the operator $A$ has a bounded extension from $L^p(G)$ into $L^q(G).$ 
\end{itemize}This completes the proof of this theorem.  \end{proof}
Now, we illustrate with an example our main Theorem \ref{main:theorem}.
We end this section with the following example. 
\begin{example} Let $L_1,L_2\in C^\infty(G)$ be complex-valued smooth functions with bounded derivatives, i.e. such that for any $\alpha,$ $X_{x}^\alpha L_1$ and $X_{x}^\alpha L_2$ are bounded functions and such that for some $\Lambda\geq 0,$ there exists a constant $c_\Lambda>0$ such that they satisfy the following {\it growth condition }
\begin{equation}\label{G;condition}
  \forall x\in G, \,\forall \lambda\geq \Lambda,\,  |L_1(x)+L_2(x)\lambda|\geq c_\Lambda(1+\lambda).
\end{equation}Then the operator
$$ A(x,\mathcal{R}):=L_1(x)+L_2(x)\mathcal{R}\in \Psi^{\nu}_{1,0}(G\times \widehat{G})$$ is a non-invariant elliptic operator with a left-parametrix $P(x,\mathcal{R})\in \Psi^{-\nu}_{1,0}(G\times \widehat{G})$ (see \cite[Proposition 5.8.2]{FischerRuzhanskyBook} and \cite{CDRFC}). According to the order condition in \eqref{order:condition}, 
if 
\begin{equation}\label{eq:ref:example}
    \nu\geq Q\left(   \frac{1}{p}-\frac{1}{q}\right),
\end{equation}where $Q$ is the homogeneous dimension of the group $G,$   $$P(x,\mathcal{R}):L^p(G)\rightarrow L^q(G)$$ is bounded. 

We note that the inequality in \eqref{eq:ref:example} is not sharp in the sense that the difference
    $$\nu- Q\left(   \frac{1}{p}-\frac{1}{q}\right)$$ could be strictly positive. However, if $r,a\in \mathbb{R}$ are real parameters in such a way that
    $$r-a=\nu- Q\left(   \frac{1}{p}-\frac{1}{q}\right),$$ we will use this fact in Theorem \ref{application} in the next section to establish the subelliptic regularity of elliptic operators.
\end{example}

\section{Applications}
\subsection{Subelliptic  regularity on graded Lie groups} This subsection is devoted to applying the $L^p$-$L^q$-boundedness of pseudo-differential operators on graded Lie groups to the validity of subelliptic estimates for elliptic operators. For the main aspects of this subject in the Euclidean setting and on compact manifolds we refer to H\"ormander \cite{Hormander1967,HormanderBook34} and Kohn and Nirenberg \cite{KN65}, see also Taylor \cite{Tay}.
\begin{theorem}\label{application}
    Let $L_1,L_2\in C^\infty(G)$ be complex functions satisfying the growth property in \eqref{G;condition}. Let $\mathcal{R}$ be a positive Rockland operator of homogeneous degree $\nu>0,$ and assume that $u\in \mathscr{S}'(G)$ and $f\in C^\infty(G)$ are solutions of the equation
    \begin{equation}
        A(x,\mathcal{R})u=f,\,  A(x,\mathcal{R}):=L_1(x)+L_2(x)\mathcal{R}.
    \end{equation}If $f\in L^p_r(G),$ for some $r\in \mathbb{R},$ with $1<p<\infty,$ then for any $s\in \mathbb{R},$ there exists $C_s>0,$ such that the {\it a-priori} estimate
    \begin{equation}
        \Vert u\Vert_{L^q_a}\leq C_s(\Vert f\Vert_{L^p_r(G)}+\Vert u \Vert_{L^q_{-s}(G)}),
    \end{equation}holds for $a \in \mathbb{R}$ provided that $1<p\leq q<\infty$ and that
    \begin{equation}
    \nu\geq a-r+ Q\left(   \frac{1}{p}-\frac{1}{q}\right).
\end{equation}
    \end{theorem}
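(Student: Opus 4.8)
\emph{Proof sketch.} The plan is to reduce the estimate to the $L^p$--$L^q$ boundedness of Theorem \ref{main:theorem} by means of a parametrix. First I would recall that, since $L_1,L_2$ satisfy the growth condition \eqref{G;condition}, the operator $A(x,\mathcal{R})=L_1(x)+L_2(x)\mathcal{R}$ is elliptic of order $\nu$, i.e. $A(x,\mathcal{R})\in\Psi^{\nu}_{1,0}(G\times\widehat{G})$, and by \cite[Proposition 5.8.2]{FischerRuzhanskyBook} it admits a left parametrix $P(x,\mathcal{R})\in\Psi^{-\nu}_{1,0}(G\times\widehat{G})$, so that
$$P(x,\mathcal{R})\,A(x,\mathcal{R})=I+R,\qquad R\in\Psi^{-\infty}(G\times\widehat{G}),$$
with $R$ a smoothing operator. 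Applying $P(x,\mathcal{R})$ to the equation $A(x,\mathcal{R})u=f$ and using that pseudo-differential operators act continuously on $\mathscr{S}'(G)$ (their adjoints being pseudo-differential by Theorem \ref{calculus}(i)), one obtains the distributional identity $u=P(x,\mathcal{R})f-Ru$, hence
$$\Vert u\Vert_{L^q_a}\leq \Vert P(x,\mathcal{R})f\Vert_{L^q_a}+\Vert Ru\Vert_{L^q_a}.$$
It then suffices to bound the first term by $\Vert f\Vert_{L^p_r(G)}$ and the second by $\Vert u\Vert_{L^q_{-s}(G)}$.

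For the parametrix term I would set $g:=(1+\mathcal{R})^{r/\nu}f$, so that $g\in L^p(G)$ with $\Vert g\Vert_{L^p}=\Vert f\Vert_{L^p_r(G)}$ and $f=(1+\mathcal{R})^{-r/\nu}g$; then $\Vert P(x,\mathcal{R})f\Vert_{L^q_a}=\Vert T g\Vert_{L^q}$ with $T:=(1+\mathcal{R})^{a/\nu}P(x,\mathcal{R})(1+\mathcal{R})^{-r/\nu}$. Since the Bessel potentials $(1+\mathcal{R})^{t/\nu}$ lie in $\Psi^{t}_{1,0}(G\times\widehat{G})$ and the calculus of Theorem \ref{calculus} is closed under composition, $T\in\Psi^{a-\nu-r}_{1,0}(G\times\widehat{G})$. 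The hypothesis $\nu\geq a-r+Q(1/p-1/q)$ is precisely $a-\nu-r\leq -Q(1/p-1/q)$, and since here $\rho=1$ the order condition \eqref{order:condition} holds for the pair $(p,q)$ throughout the range $1<p\leq q<\infty$; hence Theorem \ref{main:theorem} gives $T:L^p(G)\to L^q(G)$ boundedly, so $\Vert P(x,\mathcal{R})f\Vert_{L^q_a}\leq C\Vert f\Vert_{L^p_r(G)}$.

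For the remainder term I would set $v:=(1+\mathcal{R})^{-s/\nu}u$, which belongs to $L^q(G)$ with $\Vert v\Vert_{L^q}=\Vert u\Vert_{L^q_{-s}(G)}$ (the finiteness of the right-hand side is exactly the statement $v\in L^q$), so that $u=(1+\mathcal{R})^{s/\nu}v$ in $\mathscr{S}'(G)$ and $\Vert Ru\Vert_{L^q_a}=\Vert S v\Vert_{L^q}$ with $S:=(1+\mathcal{R})^{a/\nu}R(1+\mathcal{R})^{s/\nu}$. By composition $S\in\Psi^{-\infty}(G\times\widehat{G})\subset\Psi^{0}_{1,0}(G\times\widehat{G})$, hence $S$ is bounded on $L^q(G)$ for every $1<q<\infty$ (for instance by Theorem \ref{Lp1CardonaDelgadoRuzhansky2} applied with arbitrarily large order), so $\Vert Ru\Vert_{L^q_a}\leq C_s\Vert u\Vert_{L^q_{-s}(G)}$ with $C_s$ depending on $s$. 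Adding the two bounds yields the asserted a priori estimate.

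The only step requiring care beyond bookkeeping is the assertion that the products of the spectral multipliers $(1+\mathcal{R})^{t/\nu}$ with $P(x,\mathcal{R})$ and with $R$ stay inside the Hörmander classes $\Psi^{\bullet}_{1,0}(G\times\widehat{G})$ with the indicated orders; this rests on the fact that these Bessel potentials are genuine pseudo-differential operators with symbols in $S^{t}_{1,0}(G\times\widehat{G})$ (a functional-calculus statement for Rockland operators from \cite{FischerRuzhanskyBook}, already used in the proof of Theorem \ref{thh1}) together with the composition rule of Theorem \ref{calculus}. One should also keep in mind that the inequality is \emph{a priori}: it is established under the standing assumption $f\in L^p_r(G)$ and $u\in L^q_{-s}(G)$, so that $g$ and $v$ above are genuine $L^p$, resp. $L^q$, functions, while the continuity of pseudo-differential operators on $\mathscr{S}'(G)$ legitimises the manipulations at the distributional level prior to taking norms.
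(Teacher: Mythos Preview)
Your proof is correct and follows essentially the same route as the paper: construct a left parametrix $P\in\Psi^{-\nu}_{1,0}$ with smoothing remainder, write $u=Pf-Ru$, conjugate each piece by Bessel potentials to reduce to operators in $\Psi^{a-\nu-r}_{1,0}$ and $\Psi^{-\infty}$ respectively, and then invoke the $L^p$--$L^q$ boundedness from the order condition \eqref{order:condition} (which for $\rho=1$ collapses to $m\leq -Q(1/p-1/q)$ in all three regimes). Your write-up is in fact a bit more careful than the paper's in flagging the a~priori nature of the inequality and the role of the functional calculus in placing $(1+\mathcal{R})^{t/\nu}$ inside $\Psi^{t}_{1,0}$.
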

    \begin{proof}Let $P:=P(x,\mathcal{R})$ be a left-parametrix of $A:=A(x,\mathcal{R}).$ Then, we have that
    $S:=PA-I\in \Psi^{-\infty}_{1,0}(G\times \widehat{G}). $ Let $a, r,  s\in \mathbb{R}.$ From the identity $Au=f,$ we have that
    \begin{equation}
        PA u= u+Su=Pf,
    \end{equation}and then
    \begin{equation}
        \Vert B_{-a} u\Vert_{L^q(G)}=\Vert B_{-a}Pf-B_{-a}Su\Vert_{L^q(G)}\leq \Vert B_{-a}PB_{r}B_{-r} u\Vert_{L^q(G)}+\Vert B_{-s}S u \Vert_{L^q(G)}. 
    \end{equation}Since the operator $B_{-a}S$ is smoothing, we have that, for any $s \in \mathbb{R},$ there exists $C_s>0$ such that the following inequality holds
    $$ \Vert B_{-a}S u \Vert_{L^q(G)}= \Vert B_{-a}SB_{-s}B_{s} u \Vert_{L^q(G)}\leq C_s\Vert B_{s}u\Vert_{L^q(G)}= C_s\Vert u\Vert_{L^q_{-s}(G)}.  $$ On the other hand, the operator $B_{-a}PB_{r}$ has order $a-\nu-r$ and it is bounded from $L^p(G)$ into $L^q(G)$ (see \eqref{order:condition} and Remark \ref{rem1}) if 
    $$ -a+\nu+r\geq Q\left(   \frac{1}{p}-\frac{1}{q}\right). $$ In this case we have that 
    \begin{align*}
       \Vert u\Vert_{L^q_a(G)}=  \Vert B_{-a} u\Vert_{L^q(G)}\lesssim_{s} \Vert B_{-r}u\Vert_{L^p(G)}+\Vert u\Vert_{L^q_{-s}(G)}=\Vert u\Vert_{L^p_r(G)}+\Vert u\Vert_{L^q_{-s}(G)}.
    \end{align*}The proof is complete.        
    \end{proof}
\subsection{$L^p$-$L^q$-bounds for $\tau$-quantisations on graded Lie groups}
This subsection is devoted to discussing the implications of our results in the setting of newly developed $\tau$-quantisation on graded Lie groups by the third author with S. Federico and D. Rottensteiner \cite{FRR23} (see also \cite{MR17} for general locally compact groups).  We refer to \cite{FRR23} for more details on the $\tau$-quantisation and several other results including the global symbolic calculus for this general quantisation. 

We will here briefly recall the relationship between the $\tau$-quatisation and the global Kohn-Nirenberg quantisation on graded Lie groups. 

We say that a measurable function $\tau: G \rightarrow G$ often referred as {\it quantising function} satisfies the property (HP) if, by representing it via exponential coordinates as $\tau(x)=(c_1^\tau(x), \ldots, c_n^\tau(x)) \in G,$ either the coordinates $c_i^\tau,\,i=1,\ldots, n,$ of $\tau(x)$ vanish identically or  are homogeneous polynomials given by     
$$c_i^\tau(x):=c_i^\tau(x_1,\ldots, x_i)=C_i^\tau x_i+d_i^\tau(x_1, \ldots, x_{i-1})$$
for some nonzero $C_i^\tau$ and some homogeneous polynomials $d_i^\tau$ of degree $\nu_i$ depending only on $x_k$ for $k=1, \ldots, i-1.$ Having this quantisation function $\tau,$ one defines the $\tau$-quantisation on a graded Lie group $G$ by
\begin{equation}
    \text{Op}^{\tau}(\sigma) f(x)= \int_{\widehat{G}} \text{Tr} \left( \int_G \pi(y^{-1}x) \sigma(x \tau (y^{-1}x)^{-1}, \pi) f(y)\,  dy \right) d\pi.  
\end{equation}
In the following  result of \cite{FRR23}, the authors established the relationship between the Kohn-Nirenberg quantisation and the $\tau$-quantisation on graded Lie groups.
\begin{theorem} \cite[Theorem 4.12]{FRR23} \label{dsr}
    Let $\tau$ be quantising function different from the constant function $\tau \equiv e_G$ on $G$ which satisfies the property (HP) and let $A$ be a continuous operator from $\mathscr{S}(G)$ to $\mathscr{S}(G)$. Assume that $m \in \mathbb{R}$ and $0 \leq \delta <\min\{\rho, \frac{1}{\nu_n}\}\leq 1$  and there exist two symbols $\sigma$ and $\sigma_\tau$  such that $A= \text{Op}^\tau(\sigma_\tau)= \text{Op}(\sigma).$ Then $\sigma \in S^m_{\rho, \delta}(G \times \widehat{G})$ if and only if $\sigma_\tau \in S^m_{\rho, \delta}(G \times \widehat{G}).$
\end{theorem}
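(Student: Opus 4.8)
The plan is to produce an explicit asymptotic relation between the two symbols by means of the group Taylor expansion, and then to read off membership in $S^m_{\rho,\delta}(G\times\widehat G)$ from a homogeneity count. Write $k_x:=\mathscr{F}_G^{-1}\sigma(x,\cdot)$ and $k^\tau_z:=\mathscr{F}_G^{-1}\sigma_\tau(z,\cdot)$ for the right-convolution kernels of $\textnormal{Op}(\sigma)$ and $\textnormal{Op}^\tau(\sigma_\tau)$. Inserting these into the two quantisation formulas, interchanging the order of integration, and making the substitution $w=y^{-1}x$, the assumption $A=\textnormal{Op}(\sigma)=\textnormal{Op}^\tau(\sigma_\tau)$ becomes equivalent to the pointwise kernel identity
\begin{equation*}
 k_x(w)=k^\tau_{x\,\tau(w)^{-1}}(w),\qquad x,w\in G .
\end{equation*}
Property (HP) forces $\tau$ to commute with the dilations, so $\tau(e_G)=e_G$, and for every Taylor monomial $q_\alpha$ (homogeneous of degree $[\alpha]$) the function $w\mapsto q_\alpha(\tau(w)^{-1})$ is a polynomial which is $D_r$-homogeneous of degree $[\alpha]$; hence it is a linear combination of monomials $w^\beta$ with $[\beta]=[\alpha]$, so multiplication by it on the kernel side corresponds, after the group Fourier transform in $w$, to a difference operator $\Delta_\tau^{\alpha}$ which maps $S^{m'}_{\rho,\delta}(G\times\widehat G)$ into $S^{m'-\rho[\alpha]}_{\rho,\delta}(G\times\widehat G)$.

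I would first prove the implication $\sigma_\tau\in S^m_{\rho,\delta}\Rightarrow\sigma\in S^m_{\rho,\delta}$. Applying the group Taylor formula to $z\mapsto k^\tau_z(w)$, centred at $z=x$ and evaluated at $z=x\tau(w)^{-1}$, gives for each $N$
\begin{equation*}
 k_x(w)=\sum_{[\alpha]<N}\frac{q_\alpha(\tau(w)^{-1})}{\alpha!}\bigl(X_z^\alpha k^\tau_z\bigr)(w)\Big|_{z=x}+R_N(x,w),
\end{equation*}
and the group Fourier transform in $w$ turns this into the asymptotic expansion
\begin{equation*}
 \sigma(x,\pi)\ \sim\ \sum_{\alpha}\frac{1}{\alpha!}\,\Delta_\tau^{\alpha}\bigl(X_x^\alpha\sigma_\tau\bigr)(x,\pi).
\end{equation*}
The $\alpha$-th term maps $S^m_{\rho,\delta}$ into $S^{m+\delta[\alpha]-\rho[\alpha]}_{\rho,\delta}=S^{m-(\rho-\delta)[\alpha]}_{\rho,\delta}$, since the $x$-derivative $X_x^\alpha$ costs $\delta[\alpha]$ in order while $\Delta_\tau^\alpha$ gains $\rho[\alpha]$; because $\rho>\delta$ these orders tend to $-\infty$, and the leading term $\alpha=0$ is exactly $\sigma_\tau\in S^m_{\rho,\delta}$. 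Provided the Fourier transform in $w$ of $R_N(x,\cdot)$ lies in $S^{m-(\rho-\delta)N+o(N)}_{\rho,\delta}$, this yields $\sigma\in S^m_{\rho,\delta}$, and in fact $\sigma-\sigma_\tau\in S^{m-(\rho-\delta)}_{\rho,\delta}$. The converse is symmetric: from the rearranged identity $k^\tau_z(w)=k_{z\,\tau(w)}(w)$ one expands $x'\mapsto k_{x'}(w)$ around $x'=z$ and obtains $\sigma_\tau(z,\pi)\sim\sum_\alpha\frac{1}{\alpha!}\Delta_{q_\alpha\circ\tau}(X_x^\alpha\sigma)(z,\pi)$ with the same bookkeeping.

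The hard part will be two technical points. First, the kernels $k_x(w)$, $k^\tau_z(w)$ are smooth only for $w\neq e_G$ and are singular at the origin, so the Taylor expansion above must be justified by interleaving it with a dyadic decomposition in $|w|$ and controlling each dyadic piece by the kernel estimates attached to the classes $S^{m+\delta[\alpha]}_{\rho,\delta}$. Second, and more delicate, is the remainder estimate: one must show that the $w$-Fourier transform of $R_N(x,\cdot)$ genuinely gains order $(\rho-\delta)N$ up to an admissible loss. This is precisely where the hypothesis $\delta<\min\{\rho,1/\nu_n\}$ enters: differentiating the composed kernel in $x$ produces, through the adjoint action $\operatorname{Ad}(\tau(w)^{-1})$, extra polynomial weights in $w$ whose degrees are governed by the dilation exponents, the largest being $\nu_n$, and the inequality $\delta\,\nu_n<1$ is what guarantees that the gain coming from the resulting difference operators beats, uniformly in $N$, the order loss $\delta$ coming from the $x$-derivatives. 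Finally, to upgrade from an $L^\infty_{a,b}(\widehat G)$ bound to genuine membership in $S^m_{\rho,\delta}(G\times\widehat G)$, one re-runs the whole expansion after applying arbitrary $X_x^\beta\Delta^\gamma$, using the Leibniz rule \eqref{differenceespcia;l} for difference operators together with the equivalent characterisations (A)--(E) of the class recalled in Section \ref{sec2}.
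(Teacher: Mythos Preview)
This theorem is not proved in the present paper: it is quoted verbatim from \cite[Theorem~4.12]{FRR23} and used as a black box in the proof of Theorem~\ref{w}. There is therefore no ``paper's own proof'' to compare your proposal against.

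For what it is worth, the strategy you outline --- passing to right-convolution kernels, expanding $z\mapsto k^\tau_z(w)$ by the group Taylor formula around $x$, and reading off the order of each term from the homogeneity of $q_\alpha\circ\tau^{-1}$ --- is indeed the standard route for relating different quantisations on graded groups, and it is the line taken in \cite{FRR23}. Your identification of the two genuine difficulties (the kernel singularity at $w=e_G$ and the remainder control, with the condition $\delta<1/\nu_n$ entering through the adjoint action) is accurate. What you have written is a plan rather than a proof: the remainder estimate in particular requires a careful quantitative argument that you have only gestured at, and the phrase ``$S^{m-(\rho-\delta)N+o(N)}_{\rho,\delta}$'' is not a precise statement. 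If you actually need a proof here rather than a citation, you should consult \cite{FRR23} directly.
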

By combining Theorem \ref{dsr}, Theorem \ref{thh1} and Theorem \ref{thh2} we obtain the following $L^p$-$L^q$ boundedness result for the $\tau$-quantisation of operators on $G.$
\begin{theorem} \label{w}
    Let $m \in \mathbb{R},$ $0 \leq \delta <\min\{\rho, \frac{1}{\nu_n}\} \leq 1,$ and let $\tau$ be a quantising function satisfying (HP). Then, for $1<p \leq q <\infty,$ the operator $T =\text{Op}^\tau(\sigma)$ has a bounded extension from $L^p(G)$ to $L^q(G)$ for $\sigma \in S^m_{\rho, \delta}(G \times \widehat{G})$ if 
    $$m\leq -Q\left(   \frac{1}{p}-\frac{1}{q}+(1-\rho)\max\left\{ \frac{1}{2}-\frac{1}{p},\frac{1}{q}-\frac{1}{2},0\right\}\right).$$ 
    Moreover, every operator $\text{Op}^\tau(\sigma)$ has a bounded extension from $L^p(G)$ to $L^q(G)$ for the range $1<p \leq 2 \leq q <\infty$ if and only if 
    $m \leq -Q\left( \frac{1}{p}-\frac{1}{q} \right).$
\end{theorem}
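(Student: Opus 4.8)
The plan is to deduce Theorem \ref{w} from the transference statement Theorem \ref{dsr} together with the Kohn--Nirenberg $L^{p}$-$L^{q}$ estimates of Theorems \ref{thh1} and \ref{thh2}.

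\emph{Sufficiency.} I would start from $\sigma\in S^{m}_{\rho,\delta}(G\times\widehat{G})$ with $0\le\delta<\min\{\rho,1/\nu_{n}\}\le1$ and $m$ satisfying the stated order bound, and set $T=\textnormal{Op}^{\tau}(\sigma)$. Since $T$ is a continuous operator from $\mathscr{S}(G)$ to $\mathscr{S}(G)$, it possesses a Kohn--Nirenberg symbol $\sigma_{\textnormal{KN}}(x,\pi)=\pi(x)^{*}T\pi(x)$, i.e.\ $T=\textnormal{Op}(\sigma_{\textnormal{KN}})$, as recalled in Section \ref{sec2}. The crucial point is Theorem \ref{dsr}: because $\delta<\min\{\rho,1/\nu_{n}\}$, the membership of the $\tau$-symbol $\sigma$ in $S^{m}_{\rho,\delta}(G\times\widehat{G})$ forces $\sigma_{\textnormal{KN}}\in S^{m}_{\rho,\delta}(G\times\widehat{G})$ as well. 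Moreover $\delta<\min\{\rho,1/\nu_{n}\}\le1$ gives $\delta\le\rho$ and $\delta\ne1$, so the standing assumptions of Theorem \ref{main:theorem} are satisfied; applying it (equivalently, Theorems \ref{thh1} and \ref{thh2}, with the order condition written in the unified form \eqref{order:condition} of Remark \ref{rem1}) to $T=\textnormal{Op}(\sigma_{\textnormal{KN}})$ yields a bounded extension $T\colon L^{p}(G)\to L^{q}(G)$ whenever $1<p\le q<\infty$ and the displayed bound on $m$ holds. In the subrange $1<p\le2\le q<\infty$ both $\tfrac12-\tfrac1p$ and $\tfrac1q-\tfrac12$ are non-positive, so that bound collapses to $m\le-Q(\tfrac1p-\tfrac1q)$, which gives the sufficiency half of the final assertion.

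\emph{Necessity in the range $1<p\le2\le q<\infty$.} Here I would argue by contraposition: assuming $m>-Q(\tfrac1p-\tfrac1q)$, I would reuse the counterexample from the proof of Theorem \ref{thh1}, namely $A=B_{-m}=(1+\mathcal{R})^{m/\nu}\in\Psi^{m}_{1,0}(G\times\widehat{G})\subset\Psi^{m}_{\rho,\delta}(G\times\widehat{G})$, whose symbol $\sigma_{0}$ does not depend on $x$. Inspecting the definition of $\textnormal{Op}^{\tau}$ shows that for an $x$-independent symbol one has $\textnormal{Op}^{\tau}(\sigma_{0})=\textnormal{Op}(\sigma_{0})=B_{-m}$ for every quantising function $\tau$. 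Since $-m<Q(\tfrac1p-\tfrac1q)$, Lemma \ref{Lemma:Bessel:potential:lplq} (together with the elementary fact that $(1+\mathcal{R})^{m/\nu}$ is unbounded on $L^{2}(G)$ when $m>0$, which handles the endpoint $p=q=2$) shows that $B_{-m}$ does not extend to a bounded operator from $L^{p}(G)$ to $L^{q}(G)$; hence neither does $\textnormal{Op}^{\tau}(\sigma_{0})$, and the bound $m\le-Q(\tfrac1p-\tfrac1q)$ is necessary.

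\emph{Main obstacle.} The single delicate ingredient is the preservation of H\"ormander-class membership under the change from the $\tau$-quantisation to the Kohn--Nirenberg quantisation; this is precisely what Theorem \ref{dsr} supplies, and it is the reason the hypothesis has to be the stronger inequality $\delta<\min\{\rho,1/\nu_{n}\}$ rather than simply $\delta\le\rho$. Once $\sigma_{\textnormal{KN}}\in S^{m}_{\rho,\delta}(G\times\widehat{G})$ is in hand, everything else is an immediate appeal to the results of the previous sections, and no new harmonic-analytic estimates are required.
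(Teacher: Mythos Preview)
Your proposal is correct and follows essentially the same route as the paper: invoke Theorem \ref{dsr} to pass from the $\tau$-symbol to a Kohn--Nirenberg symbol in the same class $S^{m}_{\rho,\delta}(G\times\widehat{G})$, and then apply Theorems \ref{thh1} and \ref{thh2} (equivalently, the unified order condition \eqref{order:condition}). Your necessity argument via the observation that $x$-independent symbols give $\textnormal{Op}^{\tau}(\sigma_{0})=\textnormal{Op}(\sigma_{0})$ is a clean variant of what the paper leaves implicit in its appeal to Theorem \ref{thh1}, and your explicit treatment of the endpoint $p=q=2$ is a small but welcome addition.
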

\begin{proof}
    It follows from Theorem \ref{dsr} that the operator $A=\text{Op}^\tau(\sigma)$ can be written as $A=\text{Op}(\sigma')$ for some $\sigma' \in S^m_{\rho, \delta}(G \times \widehat{G})$ in a unique way. Hence,  the assertion of theorem immediately   follows  from Theorem \ref{thh1} and Theorem \ref{thh2}. 
\end{proof}

\bibliographystyle{amsplain}

\end{document}